\documentclass[11pt]{amsart}

\usepackage{enumitem, amsmath, amsthm, amsfonts, amssymb, xy,  mathrsfs, graphicx, lscape}
\usepackage[usenames, dvipsnames]{color}
\usepackage[margin=1in]{geometry} 
\usepackage{tikz}
\usepackage{pgfplots}
\usepackage{mathtools}
\usepackage{comment}
\usetikzlibrary{matrix,arrows, calc}
\setcounter{tocdepth}{1}

\pgfplotsset{compat=1.17}
\usepackage{tikz-cd}
\usepackage{xcolor}
\usepackage{stackengine}

\theoremstyle{definition}

\numberwithin{equation}{section}
\newtheorem{theorem}[equation]{Theorem}

\newtheorem{proposition}[equation]{Proposition}
\newtheorem{lemma}[equation]{Lemma}
\newtheorem{corollary}[equation]{Corollary}
\newtheorem{conjecture}[equation]{Conjecture}
\newtheorem{problem}[equation]{Problem}

\newtheorem{rmk}[equation]{Remark}
\newenvironment{remark}[1][]{\begin{rmk}[#1] }{\popQED \end{rmk}}
\newtheorem{eg}[equation]{Example}
\newenvironment{example}[1][]{\begin{eg}[#1] \pushQED{\qed}}{\popQED \end{eg}}
\newtheorem{defn}[equation]{Definition}
\newenvironment{definition}[1][]{\begin{defn}[#1]}{\popQED \end{defn}}
\newtheorem{ques}[equation]{Question}

\usepackage[in]{fullpage}
\addtolength{\oddsidemargin}{.25in}
\addtolength{\evensidemargin}{.25in}
\addtolength{\textwidth}{-.5in}

\addtolength{\topmargin}{0in}
\addtolength{\textheight}{0in}


\usepackage[colorlinks=true, pdfstartview=FitV, linkcolor=black, citecolor=black, urlcolor=black]{hyperref}

\renewcommand{\phi}{\varphi}
\renewcommand{\emptyset}{\varnothing}

\newcommand{\bigzero}{\mbox{\normalfont\Large\bfseries 0}}
\newcommand{\rvline}{\hspace*{-\arraycolsep}\vline\hspace*{-\arraycolsep}}

\DeclareMathOperator{\rep}{rep}
\DeclareMathOperator{\Hom}{Hom}


\newcommand{\cody}[1]{\textcolor{purple}{$[\star$ Cody: #1 $\star]$}}

\makeatletter
\def\Ddots{\mathinner{\mkern1mu\raise\p@
\vbox{\kern7\p@\hbox{.}}\mkern2mu
\raise4\p@\hbox{.}\mkern2mu\raise7\p@\hbox{.}\mkern1mu}}
\makeatother

\usepackage[utf8]{inputenc}

\title{Moduli of Representations of Skewed-Gentle Algebras}

\begin{document}
\author{Cody Gilbert}
\address{Saint Louis University, Department of Mathematics and Statistics, Saint Louis, USA}
\email[Cody Gilbert]{cody.gilbert@slu.edu}

\maketitle

\begin{abstract}
We prove irreducible components of moduli spaces of semistable representations of skewed-gentle algebras, and more generally, clannish algebras, are isomorphic to products of projective spaces. This is achieved by showing irreducible components of varieties of representations of clannish algebras can be viewed as irreducible components of skewed-gentle algebras, which we show are always normal. The main theorem generalizes an analogous result for moduli of representations of special biserial algebras proven by Carroll-Chindris-Kinser-Weyman.
\end{abstract}

\tableofcontents

\section{Introduction}
Throughout this paper, $k$ will be an algebraically closed field of characteristic zero, all quivers will be finite and connected and all algebras will be assumed to be associative and finite-dimensional over $k$.

In this paper, we view representations of algebras through the lens of Geometric Invariant Theory (GIT) in order to construct moduli spaces of representations, which were introduced by King in \cite{KING}. These moduli spaces of representations are projective varieties whose points are in bijection with isomorphism classes of polystable representations. Work dedicated to understanding moduli of representations of algebras includes \cite{BobSkow, DoLe, GeissSchroer,Reineke2, Reineke3, Domokos, Reineke1, tametilted, Weist, FlorLaw, quasitilted, ChiCar, gentle, ReiSch, ModDec, Hoskins1, Hoskins2, ModSpBi, HosSchaf, AmruDubey, Franzen, ChiKli, FrReiSaba}. 

In general, moduli of representations can be arbitrarily complicated, in the sense that any projective variety is expressible as a moduli space of representations \cite{Hille, Huisgen}. With this in mind, we impose conditions on the algebras of consideration in order to obtain more precise descriptions of their moduli spaces. For example, from the above observation, there is no hope in classifying moduli spaces of representations of wild algebras; however, when restricting our focus to tame algebras there is still hope that the following classification will hold:

\begin{conjecture}\cite[\S 7]{gentle}\label{conj:conjecture}
Let $(Q, I)$ be a bound quiver, and $A=kQ/I$ its bound quiver algebra. If $A$ is of tame representation type, then for any irreducible component $Z\subset \text{rep}_Q(I, \textbf{d})$ and any weight $\theta$ such that $Z^{ss}_{\theta}\neq \emptyset$, $\mathcal{M}(Z)^{ss}_{\theta}$ is a product of projective spaces.
\end{conjecture}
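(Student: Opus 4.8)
The full conjecture for arbitrary tame algebras is surely out of reach, so the plan is to prove it for the class this paper actually treats, following the strategy flagged in the abstract: reduce clannish algebras to skewed-gentle ones, and then reduce the moduli computation to combinatorics. First I would fix an irreducible component $Z\subset \text{rep}_Q(I,\mathbf{d})$ together with a weight $\theta$ such that $Z^{ss}_\theta\neq\emptyset$, and invoke the reduction announced in the abstract: for a clannish algebra, the relevant irreducible components of its representation variety should be identifiable—as varieties carrying the same $\mathrm{GL}(\mathbf{d})$-action—with irreducible components of the representation variety of an associated skewed-gentle algebra. Since $\mathcal{M}(Z)^{ss}_\theta$ depends only on $Z$, the group action, and $\theta$, this reduces the conjecture to the skewed-gentle case, and from here I would assume $A$ is skewed-gentle.

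Next I would analyze $Z$ through its generic (canonical) decomposition in the sense of Crawley-Boevey--Schr\"oer: a general representation in $Z$ splits as a direct sum of indecomposables, and for skewed-gentle algebras these are of a very restricted type, namely (skewed) string modules, which are rigid, and band modules, which vary in one-parameter families that compactify to $\mathbb{P}^1$ inside the moduli space. The string summands contribute no moduli and stay fixed across the component. Writing the generic decomposition so that the band types $B_1,\dots,B_r$ occur with multiplicities $m_1,\dots,m_r$, I would argue that $\theta$-semistability forces each band family to be $\theta$-stable with $\mathbb{P}^1$-parameter, and that the quotient factors over the band types. The classical identification $\mathrm{Sym}^m(\mathbb{P}^1)\cong\mathbb{P}^m$ then gives
\[
\mathcal{M}(Z)^{ss}_\theta \;\cong\; \prod_{i=1}^{r}\mathrm{Sym}^{m_i}\!\big(\mathbb{P}^1\big) \;\cong\; \prod_{i=1}^{r}\mathbb{P}^{m_i},
\]
a product of projective spaces, as predicted.

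The hard part, and where I expect the real work to lie, is making this factorization rigorous, and this is precisely where normality enters. One can write down a morphism from the product of symmetric products onto $\mathcal{M}(Z)^{ss}_\theta$ that is bijective on points and birational, but upgrading such a morphism to an isomorphism of varieties requires control of the singularities of the target: since GIT quotients of normal varieties are normal, normality of $Z$ makes $\mathcal{M}(Z)^{ss}_\theta$ normal, and a bijective birational morphism onto a normal variety in characteristic zero is an isomorphism by Zariski's main theorem. Thus the technical heart is exactly the claim isolated in the abstract, that irreducible components of representation varieties of skewed-gentle algebras are always normal. I would prove this by giving each component an explicit local model built from the string/band combinatorics—presenting $Z$ over the stable locus as a fiber bundle with affine-space fibers over a base assembled from the band parameters—and then verifying Serre's criterion $(R_1)+(S_2)$, or alternatively exhibiting a resolution with connected fibers. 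The delicate point, and the main obstacle, is controlling the singularities contributed by the \emph{special} loops that distinguish skewed-gentle from ordinary gentle algebras, since these are exactly the loci where the representation variety can fail to be smooth.
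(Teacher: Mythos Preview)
Your overall architecture matches the paper's: reduce from clannish to skewed-gentle, prove normality there, and use a decomposition theorem together with $\mathrm{Sym}^{m}(\mathbb{P}^1)\cong\mathbb{P}^m$. But two of your steps diverge from what the paper actually does, and one of them hides a genuine gap.

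\medskip

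\textbf{The reduction from clannish to skewed-gentle is not automatic.} You assert that ``the relevant irreducible components of its representation variety should be identifiable \ldots\ with irreducible components of the representation variety of an associated skewed-gentle algebra,'' and then move on. This is where the paper does real work. The closed embedding $\rep_Q(I,\mathbf{d})\subset\rep_Q(J,\mathbf{d})$ (with $J\subset I$ the skewed-gentle ideal) certainly sends irreducible components to irreducible closed subvarieties, but there is no reason a priori that they remain \emph{components}. The paper's mechanism is a dimension argument: for any tame algebra, every irreducible component has dimension at most $\dim\mathrm{GL}(\mathbf{d})$, and a component $Z=\overline{Z_1\oplus\cdots\oplus Z_m}$ with each $Z_i$ Schur, $c_A(Z_i)=1$, and pairwise Hom-orthogonal actually \emph{attains} this bound. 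Hence such a $Z$ is forced to be maximal, i.e.\ a component, in the larger skewed-gentle variety too. Crucially, this only applies to components of that special shape, so the order of operations matters: the paper first applies the $\theta$-stable decomposition theorem to reduce to $\widetilde{Z}=\overline{Z_1^{\oplus m_1}\oplus\cdots\oplus Z_r^{\oplus m_r}}$ with no orbit-closure summands, and \emph{then} invokes the dimension argument on $\widetilde{Z}$. Your proposal reverses this (reduce to skewed-gentle first, then decompose), which does not obviously work.

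\medskip

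\textbf{Decomposition and normality.} You invoke the Crawley-Boevey--Schr\"oer generic (Krull--Schmidt) decomposition; the paper instead uses the $\theta$-stable decomposition of \cite{ModDec}, which is the one that plugs directly into Theorem~\ref{Thm:DecModSp}. The two are related but not identical, and the $\theta$-stable version is what guarantees the summands are Schur and Hom-orthogonal, feeding the dimension argument above. For normality of skewed-gentle components, your plan (local models, Serre's criterion, resolutions) is plausible but vague exactly where you flag it as ``delicate.'' The paper's argument is much cleaner: take the $\rho$-block decomposition of the algebra. Each special loop sits in its own $\rho$-block with relation $e^2=e$, whose representation variety is the variety of idempotent $n\times n$ matrices; this is a disjoint union of $\mathrm{GL}_n$-orbits (separated by trace), hence smooth. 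The remaining $\rho$-blocks contain no special loops and are therefore ordinary gentle algebras, whose components are already known to be normal (affine Schubert varieties of type $A$). So every skewed-gentle component is a product of a smooth variety and normal varieties, hence normal. This sidesteps entirely the local analysis you anticipate.
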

Due to the lack of classification of tame algebras, this conjecture is unlikely to be proven
in the near future, so it makes sense to restict our focus to specific types of tame
algebras. In fact, for certain classes of tame algebras, the above decomposition of moduli
spaces as a product of projective spaces is known to hold. For instance, the
decomposition holds for hereditary algebras \cite{chindrisHereditary}, tame tilted algebras \cite[Proposition 4.4]{tametilted}, tame
quasi-tilted algebras \cite[Proposition 6.2]{quasitilted}, gentle algebras
\cite[Theorem 1]{gentle} and special biserial algebras \cite[Theorem 1.1]{ModSpBi}.

While there are numerous generalizations of gentle algebras and special biserial
algebras, see \cite{Benson, GreenSchroll2, GreenSchroll1}, in this paper, we will
focus specifically on skewed-gentle algebras \cite{GePe} and clannish algebras
\cite{CB2, Geiss}. Clannish algebras were introduced as a special class of clans, a
class of tame matrix problems, in \cite{CB2}. Like special biserial algebras, the
indecomposable representations of a clannish algebra are well known \cite{CB2} and
correspond to words that are either (symmetric) strings or (symmetric) bands. Clannish
algebras generalize special biserial algebras in the sense that all but finitely many
indecomposable representations of clannish algebras are pushed forward from $\mathbb{A}, \widetilde{\mathbb{A}}, \mathbb{D}$ and $\widetilde{\mathbb{D}}$ quivers, as opposed to just $\mathbb{A}$ and $\widetilde{\mathbb{A}}$ quivers in the special biserial case.
One can
think of skewed-gentle algebras and clannish algebras as being gentle algebras and
special biserial algebras, respectively, with the additional ability to now glue
idempotent loops to vertices. Work involving skewed-gentle algebras
includes \cite{ChenLu2, ChenLu, AB19, HeZhouZhu, LabFragSchr}, while applications for
clans and clannish algebras have surfaced in recent years in cluster theory, see
\cite{QiuZhou, AmiotPlam}.

In this paper, we generalize the classification of moduli spaces of representations of special biserial algebras found in \cite{ModSpBi} to the more general class of clannish algebras. Namely, the main theorem confirms that Conjecture \ref{conj:conjecture} holds for clannish algebras.

\begin{theorem}\label{thm:projmod}
Let $\Lambda=kQ/I$ be a clannish algebra (for example, a skewed-gentle algebra). Then any irreducible component of a moduli space $\mathcal{M}(A, \textbf{d})^{ss}_{\theta}$ is isomorphic to a product of projective spaces.
\end{theorem}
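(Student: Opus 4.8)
The plan is to adapt the strategy of Carroll--Chindris--Kinser--Weyman for special biserial algebras \cite{ModSpBi}, systematically replacing gentle algebras by skewed-gentle algebras, and to argue in three stages: a reduction from the clannish case to the skewed-gentle case, a normality result for the relevant components, and a geometric invariant theory computation. For the reduction, recall that the indecomposable representations of a clannish algebra $\Lambda$ are governed by (symmetric) strings and (symmetric) bands, with all but finitely many pushed forward from $\mathbb{A}, \tilde{\mathbb{A}}, \mathbb{D}, \tilde{\mathbb{D}}$ quivers; the expectation is that the relations distinguishing $\Lambda$ from an associated skewed-gentle algebra $\Lambda'$ do not alter the generic behavior along a component. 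Concretely, I would identify each irreducible component $Z \subseteq \text{rep}_Q(I, \mathbf{d})$ with an irreducible component of a representation variety of $\Lambda'$ and verify that this identification preserves not just the underlying variety but the full GIT package: the weight $\theta$, the $\theta$-semistable locus, and hence the quotient $\mathcal{M}(Z)^{ss}_\theta$. This reduces Theorem \ref{thm:projmod} to the skewed-gentle case.

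The second and most delicate stage is to prove that every irreducible component of $\text{rep}_Q(I, \mathbf{d})$ for a skewed-gentle algebra is a normal variety. For gentle algebras the analogous normality statement is known, and I would transport it to the skewed-gentle setting by describing a component explicitly through the combinatorics of (symmetric) strings and bands: exhibit it, up to the $\text{GL}_{\mathbf{d}}$-action, as fibered over the band parameters with affine-space fibers, and then verify normality either through Serre's criterion or by producing an explicit resolution realizing the component as a normal image. Normality matters because it ensures the GIT quotient $\mathcal{M}(Z)^{ss}_\theta = \operatorname{Proj} \bigoplus_{n \geq 0} \mathrm{SI}(Z)_{n\theta}$ is itself normal, which in turn lets me upgrade a merely bijective parametrization of the moduli space into an isomorphism.

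The final stage identifies $\mathcal{M}(Z)^{ss}_\theta$ as a product of projective spaces via the generic decomposition of $Z$. A generic $\theta$-semistable representation in $Z$ is polystable, hence a direct sum $\bigoplus_i M_i^{a_i}$ of $\theta$-stable indecomposables $M_i$. For a skewed-gentle algebra these stable summands are of two kinds: string-type modules, which are rigid and contribute a point, and band-type modules, whose isoclasses in a fixed dimension vector trace out a rational curve compactifying to $\mathbb{P}^1$. King's construction then realizes the moduli space as a product over the distinct stable types, with an isotypic block of $a$ copies of a band module contributing the symmetric product $\operatorname{Sym}^a(\mathbb{P}^1) \cong \mathbb{P}^a$ and string blocks contributing points; taking the product over all blocks yields the claimed product of projective spaces.

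The main obstacle I anticipate is the normality of skewed-gentle components, intertwined with the control of the symmetric-band contributions coming from the $\mathbb{D}$ and $\tilde{\mathbb{D}}$ quivers, which have no counterpart in the special biserial setting. One must check that the stable families of symmetric bands are still rational $\mathbb{P}^1$'s, so that their symmetric powers remain projective spaces, rather than quotients by a finite group that could spoil the projective-space structure; and one must confirm that the clannish-to-skewed-gentle reduction faithfully transports the entire GIT data, not just the components of the representation variety. Once these points are secured, the three stages combine to give the theorem.
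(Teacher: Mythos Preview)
Your three-stage outline is the right shape, and the use of Theorem~\ref{Thm:DecModSp} at the end is exactly what the paper does. But two of your stages diverge from the paper in ways worth flagging.

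First, the reduction. You propose to identify \emph{every} irreducible component $Z\subseteq\text{rep}_Q(I,\mathbf{d})$ of the clannish algebra with an irreducible component for the skewed-gentle algebra $\Lambda'$. This is too strong and likely false for string (orbit-closure) components, whose dimension is strictly below $\dim\text{GL}(\mathbf{d})$ and which can therefore sit properly inside a larger component of $\text{rep}_Q(J,\mathbf{d})$. The paper avoids this by reversing your order of operations: it \emph{first} applies parts (a) and (b) of Theorem~\ref{Thm:DecModSp} to replace $Z$ by $\widetilde{Z}=\overline{Z_1^{\oplus m_1}\oplus\cdots\oplus Z_r^{\oplus m_r}}$ with all $Z_i$ Schur, $\theta$-stable, and non-orbit-closures, and only then transports $\widetilde{Z}$ into the skewed-gentle variety. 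The point (Lemma~\ref{lem:tameirred}) is that such a $\widetilde{Z}$ has $\dim\widetilde{Z}=\dim\text{GL}(\mathbf{d})$, which is the maximum possible for any tame algebra (Lemma~\ref{lem:dim}), so it is forced to remain an irreducible component under the closed embedding $\text{rep}_Q(I,\mathbf{d})\subset\text{rep}_Q(J,\mathbf{d})$. No GIT-compatibility check is needed: it is literally the same variety, now recognized as a component upstairs.

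Second, normality. Your plan to exhibit skewed-gentle components via string/band combinatorics as a fibration with affine fibers, or to resolve them explicitly, would be hard to carry out and is not what the paper does. The paper's argument (Proposition~\ref{lem:skewgentlenormal}) is structural and much shorter: use the $\rho$-block decomposition~(\ref{eqn: rho-block}). Because no relation in a skewed-gentle algebra begins or ends with a special loop (condition (C1)), each special loop is its own $\rho$-block, contributing a variety $E_n=\{M:M^2=M\}$ of idempotent matrices, which is a disjoint union of $\text{GL}_n$-orbits and hence smooth. The remaining $\rho$-blocks are gentle, and their components are already known to be normal. So every skewed-gentle component is a product of smooth and normal varieties, hence normal. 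This sidesteps entirely the explicit analysis of symmetric bands and $\mathbb{D}/\widetilde{\mathbb{D}}$ contributions that you were worried about; the rationality of $\mathcal{M}(Z_i)^{ss}_\theta\cong\mathbb{P}^1$ for the stable pieces then follows from the general tame result \cite[Proposition~7]{gentle} once normality is in hand.
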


The proof of this theorem relies on showing certain irreducible components are normal varieties.
By interchanging gentle and special biserial algebras with skewed-gentle and clannish algebras, respectively,
one can extend the techniques utilized in \cite{ModSpBi} to prove Theorem \ref{thm:projmod}.

Explicitly, we show that any clannish algebra can be viewed as a quotient of a skewed-gentle algebra and irreducible representation varieties of such algebras are normal in Lemma \ref{lem:clantoskewgentle} and Lemma \ref{lem:skewgentlenormal}.
Via a dimension argument, we conclude irreducible components of
interest within the representation variety for the clannish algebra remain
irreducible components in the larger representation variety for the
skewed-gentle algebra and are thus normal by the previous observation.
An application of the general moduli decomposition theorem, proven in \cite{ModDec}, finishes the argument.

\vspace{5mm}
\noindent\textbf{Acknowledgements.} The author would like to thank Ryan Kinser for the invaluable guidance throughout the years, as well as Calin Chindris for the helpful comments and conversations.

\section{Moduli of Representations of Algebras}
\subsection{Representation Varieties}
A classical
result from Gabriel gives that any finite dimensional unital, associative $k$-algebra $A$ is Morita equivalent to a quotient $kQ/I$,
where $Q$ is a finite quiver uniquely determined by $A$ and $I$ is an admissible ideal of the path algebra $kQ$ generated by a collection $\mathcal{R}$ of linear combinations of paths of length at least two.
With this in mind, and by slight abuse of terminology, we refer to a module over $kQ$ annihilated by $I$ as a module over $A$. 
The explicit construction of $Q$ and $I$, given an algebra $A$, can be found in \cite[\S 2.3]{ASS06}.

For a quiver $Q$, we will let $Q_0$ denote the set of vertices and $Q_1$ be the set of arrows.
Further, $ta$ and $ha$ will denote the tail and head of an arrow $ta \xrightarrow{a} ha$, respectively.
We will assume throughout that $Q$ has finitely many arrows and vertices. 
For a fixed dimension vector $\textbf{d}\in\mathbb{N}^{Q_0}$, we define the affine representation variety $\text{rep}_Q(I,\textbf{d})$,
which parametrizes the $\textbf{d}$-dimensional representations of $A\cong kQ/I$ along with a fixed basis. 
Explicitly, 

\begin{equation*}\label{eqn:RepVar}
\text{rep}_Q(I,\textbf{d}):=\{M\in \prod_{a\in Q_1}\text{Mat}_{\textbf{d}(ha)\times \textbf{d}(ta)}(k)\,|\, M(r)=0, \text{ for all } r\in I\}.
\end{equation*}
In general, $\text{rep}_Q(I,\textbf{d})$ does not need to be irreducible. 

With $\text{GL}(\textbf{d})=\prod_{x\in Q_0} \text{GL}(\textbf{d}(x), k)$, we have an action on $\text{rep}_Q(I,\textbf{d})$ defined by 
\begin{equation}
    (\phi\cdot M)(a):=\phi(ha)M(a)\phi^{-1}(ta), \text{ where } a\in Q_1 \text{ and } \phi\in \text{GL}(\textbf{d}).
\end{equation}
One finds the orbits are in bijection with the isomorphism classes of $\textbf{d}$-dimensional representations over $A$. Additionally, the orbit-stabilizer theorem gives the equation
\begin{equation}\label{eqn:orbitstab}
    \dim \text{GL}(\textbf{d})\cdot M=\dim \text{GL}(\textbf{d})-\dim \text{Stab}_{\text{GL}(\textbf{d})}(M)=\dim\text{GL}(\textbf{d})-\dim\text{End}(M).
\end{equation}

An irreducible component $Z$ of $\text{rep}_Q(\langle \mathcal{R}\rangle,\textbf{d})$ is said to be indecomposable if its general points are indecomposable representations over $A$.
Similarly, $Z$ is a Schur component if the general points of $Z$ are Schur representations.
By definition, Schur components are indecomposable. 

For dimension vectors $\textbf{d}_i\in\mathbb{N}^{Q_0}$, and $\text{GL}(\textbf{d}_i)$-invariant constructible subsets $Z_i\subset \text{rep}_Q(I, \textbf{d}_i)$, $1\leq i\leq l$, we write 
\begin{equation*}
    Z_1 \oplus \ldots \oplus Z_l = \{M\in\text{rep}_Q(I, \sum_{i=1}^l\textbf{d}_i)\, |\, M\cong\bigoplus_{i=1}^l M_i \text{ with } M_i\in Z_i, \text{ for all } 1\leq i\leq l\} .
\end{equation*}
A result from de la Pe\~{n}a in \cite{dlP91} and Crawley-Boevey and Schro{\"e}r in \cite[Theorem 1.1]{CBS02} allows one to express any irreducible component $Z\subseteq\text{rep}_Q(I, \textbf{d})$ as a direct sum

\begin{equation}\label{eqn:irreducibledecomp}
    Z = \overline{Z_1\oplus\ldots\oplus Z_l}
\end{equation}
where the $Z_i\subseteq\text{rep}_Q(I, \textbf{d}_i)$ are indecomposable irreducible components with $\sum_{i=1}^l \textbf{d}_i=\textbf{d}$. Moreover, the $Z_1,\ldots, Z_l$ are uniquely determined by this decomposition, up to reordering.

When determining geometric properties of a representation variety over a finite dimensional algebra
$A=kQ/I$, it can be helpful to decompose the algebra into its $\rho$-block components
\cite{GLS}. Specifically, if $I$ is generated by a set $\{\rho_1,\ldots, \rho_m\}$ of
relations with $$\rho_k=\sum_{i=1}^s \lambda_i p_i,$$ and $Q(\rho_k)$ is the smallest
subquiver of $Q$ containing the paths $p_i$, then for arrows $a,b\in Q_1$, we write
$a\sim b$ if there is a $k\in \{1,\ldots m\}$ with $a,b\in Q(\rho_k)$.   

Each equivalence class with respect to $\sim$ in $Q$ determines a subquiver of $Q$ as
well as a subalgebra of $A$. These subalgebras are called the $\rho$-blocks of $A$. 

Let $A_1=kQ_1/I_1,\ldots, A_t=kQ_t/I_t$ be the $\rho$-blocks of $A$, $\textbf{d}\in \mathbb{N}^{|Q_0|}$ be a dimension vector, and for $1\leq j\leq t$, let $\pi_j(\textbf{d})$ be the corresponding dimension vector for $A_j$. Then each $M\in\text{rep}_Q(I,\textbf{d})$ induces an element $\pi_j(M)\in \text{rep}_{Q_j}(I_j, \textbf{d}_j)$ via restriction and we have the following isomorphism of affine varieties
\begin{align}\label{eqn: rho-block}
\text{rep}_Q(I, \textbf{d})&\rightarrow \text{rep}_{Q_1}(I_1, \pi_1(\textbf{d}))\times\ldots\times \text{rep}_{Q_t}(I_t, \pi_t(\textbf{d}))\\
M &\mapsto (\pi_1(M),\ldots, \pi_t(M)).\nonumber
\end{align}
The above isomorphism allows us to work with the simpler varieties
$\text{rep}_{Q_j}(I_j, \pi_j(\textbf{d}))$ 
in order to prove geometric properties of $\text{rep}_Q(I, \textbf{d})$.
For example, later in this paper we will prove irreducible components of varieties of skewed-gentle algebras are normal by showing the irreducible components of varieties over each $\rho$-block are normal.
\subsection{Semi-invariants}

The moduli spaces of interest will have coordinate rings given by semi-invariant rings, so we review spaces of semi-invariants briefly. 

The one-dimensional rational characters of $\text{GL}(n)$ are precisely the integer powers of determinants, so the characters of $\text{GL}(\textbf{d})$ are all of the form 

\begin{align*}
    \chi_{\theta}: \text{GL}(\textbf{d})& \rightarrow k^*\\
    (g_x)_{x\in Q_0}&\mapsto \prod_{x\in Q_0} \det (g_x)^{\theta(x)}
\end{align*}
where $\theta\in\mathbb{Z}^{Q_0}$ is an integral weight of $Q$. In this way, we get a natural isomorphism of $\mathbb{Z}^{Q_0}$ onto the group of rational characters of $\text{GL}(\textbf{d})$.   

For each rational character $\chi_{\theta}: \text{GL}(\textbf{d})\rightarrow k^{*}$ determined by a weight $\theta$, the eigenspace 
\begin{equation*}\label{eqn:SemiSpace}
\text{SI}(A,\textbf{d})_{\theta}=\{f \in k[\text{rep}_Q(I,\textbf{d})]\, | \, g\cdot f=\chi_{\theta}(g)f \text{ for all } g\in\text{GL}(\textbf{d})\}
\end{equation*}
is called the space of semi-invariants on $\text{rep}_Q(I,\textbf{d})$ of weight $\theta$.
If $\text{SI}(A,\textbf{d})_{\theta}\neq\{0\}$, then every 
$f\in \text{SI}(A,\textbf{d})_{\theta}\backslash\{0\}$ is a simultaneous eigenvector with eigenvalue $\chi_{\theta}(g)$ for all $g\in \text{GL}(\textbf{d})$. 
From these spaces we obtain the graded ring 
\begin{equation*}\label{eqn:SemiRing}
\bigoplus_{n\geq 0} \text{SI}(A,\textbf{d})_{n\theta}=k[\text{rep}_Q(I,\textbf{d})]^{\ker \chi_\theta}.
\end{equation*}
For $Z$ a $\text{GL}(\textbf{d})$-invariant, irreducible, closed subvariety of $\text{rep}_Q(I,\textbf{d})$, we can similarly define vector spaces $\text{SI}(Z)_{\theta}$ and hence obtain analogous graded rings for $Z$ as well. 

\subsection{Moduli Spaces of Representations}

The final ingredient required to define moduli spaces of representations involves introducing a notion of stability for affine representation varieties.
For this, we follow King's notion of stability found in \cite{KING}. 
With $\theta\in\mathbb{Z}^{Q_0}$ a weight and a dimension vector $\textbf{d}$, we define 
\begin{equation*}\label{eqn:DotProd}
\theta(\textbf{d})=\sum_{x\in Q_0}\theta(x)\textbf{d}(x).
\end{equation*}
\begin{definition}\label{def:Stability} {\ }

\begin{itemize}
\item A point $M\in\text{rep}_Q(I,\textbf{d})$ is $\theta$-semistable if $\theta(\text{\textbf{dim} } M)=0$ and $\theta(\text{\textbf{dim} } M')\leq 0$ for all $M'\leq M$.
\item A $\theta$-semistable representation $M$ is $\theta$-stable if $\theta(\text{\textbf{dim} } M')<0$ for all proper subrepresentations $M'$ of $M$.
\item A representation $M$ is $\theta$-polystable if it is a direct sum of $\theta$-stable representations.
\end{itemize}
\end{definition}

With the above definition in mind, we have two open (possibly empty) subsets
\begin{align*}
\text{rep}_Q(I, \textbf{d})^{ss}_{\theta}& =\{M\in \text{rep}_Q(I, \textbf{d})\, | \, M \text{ is } \theta\text{-semistable}\}\\
\text{rep}_Q(I,\textbf{d})^{s}_{\theta}& =\{M\in \text{rep}_Q(I, \textbf{d})\, | \, M \text{ is } \theta\text{-stable}\}
\end{align*}
of $\theta$(-semi)-stable representations of $A$ with dimension vector $\textbf{d}$.
The category $\text{rep}_Q(I)^{ss}_{\theta}$ of $\theta$-semistable representations of $A$ is abelian with simple objects precisely the $\theta$-stable representations.
In particular, Hom-spaces between $\theta$-stable representations either have dimension zero or one.
Further, two $\theta$-stable representations are $S$-equivalent if they have the same $\theta$-stable composition factors (counted with multiplicity).

In \cite{KING}, the projective spectrum is applied to the semi-invariant rings above to obtain the projective variety 
\begin{equation}\label{eqn:ModSpace}
\mathcal{M}(A, \textbf{d})^{ss}_{\theta}:= \text{Proj}(\bigoplus_{n\geq 0} \text{SI}(A,\textbf{d})_{n\theta}).
\end{equation}
The projective variety 
$\mathcal{M}(A, \textbf{d})^{ss}_{\theta}$
is a GIT quotient of 
$\text{rep}_Q(I, \textbf{d})^{ss}_{\theta}$
by the action of 
$\text{PGL}(\textbf{d})\leq \text{GL}(\textbf{d})$.
For $Z$ a $\text{GL}(\textbf{d})$-invariant, irreducible, closed subvariety of
$\text{rep}_Q(I, \textbf{d})$, we can define the locus of $\theta$(-semi)-stable representations 
$Z^{(s)s}_{\theta}$, as well as $\mathcal{M}(Z)^{(s)s}_{\theta}$, in an analogous manner.

By construction, the points of 
$\mathcal{M}(Z)^{ss}_{\theta}$ are in bijection with the closed, $\theta$-semistable $\text{GL}(\textbf{d})$-orbits in $Z$,
which are equivalent to the isomorphism classes of $\theta$-polystable representations in $Z^{ss}_{\theta}$.
In this sense, $\mathcal{M}(Z)^{ss}_{\theta}$ is a coarse moduli space for $\theta$-semistable representations of dimension vector $\textbf{d}$ up to $S$-equivalence.

In order to better understand $\mathcal{M}(Z)^{ss}_{\theta}$,
we would like to be able to relate the moduli space of $Z$ with the moduli spaces of its $\theta$-stable composition factors.
With this in mind, we begin with the following definition from \cite{ModDec}.

\begin{definition}\label{def:StableDecomp}
Let $Z$ be a $\text{GL}(\textbf{d})$-invariant, irreducible, closed subvariety of $\text{rep}_Q(I, \textbf{d})$, and assume $Z$ is $\theta$-semistable.
Consider a collection, $(Z_i\subseteq \text{rep}_Q(I,\textbf{d}_i))_i$ of $\theta$-stable irreducible components such that $Z_i\neq Z_j$ for $i\neq j$, along with a collection of multiplicities $(m_i\in \mathbb{Z}_{>0})_i$.
We say that $(Z_i, m_i)_i$ is a $\theta$-stable decomposition of $Z$ if, for a general representation $M\in Z^{ss}_{\theta}$, its corresponding $\theta$-polystable representation $\widetilde{M}$ is in $Z_1^{m_1}\oplus\ldots\oplus Z_l^{\oplus m_l}$, and write 
\begin{equation}\label{eqn:StableDecomp}
Z = m_1Z_1\dot{+}\ldots\dot{+}m_lZ_l.
\end{equation}
\end{definition} 
\noindent Any $\text{GL}(\textbf{d})$-invariant, irreducible, closed subvariety $Z$ of
$\text{rep}_Q(I, \textbf{d})$ with $Z^{ss}_{\theta}\neq \emptyset$ has a $\theta$-stable decomposition
by \cite[Proposition 3]{ModDec}.

The next theorem tells us that when analyzing moduli spaces of a given irreducible component $Z$,
we may assume that a general point of $Z$ is a direct sum of its $\theta$-stable composition factors,
while the multiplicities in the $\theta$-stable decomposition contribute symmetric powers to the overall moduli space.
As a reminder, the $m^{\text{th}}$ symmetric power $S^m(X)$ of a variety $X$ is the quotient of $\prod_{i=1}^m X$ by the action of the permutation group $S_m$.

\begin{theorem}\cite[Theorem 1]{ModDec}\label{Thm:DecModSp}
Let $A=kQ/I$ be a finite-dimensional algebra and let $Z\subseteq \text{rep}_Q(I, \textbf{d})^{ss}_{\theta}$
be a $\text{GL}(\textbf{d})$-invariant, irreducible, closed subvariety.
Let $Z=m_1Z_1\dot{+}\cdots\dot{+}m_rZ_r$ be a $\theta$-stable decomposition of $Z$ where
$Z_i\subseteq \text{rep}_Q(I, \textbf{d}_i), 1\leq i\leq r$, are pairwise distinct $\theta$-stable irreducible components,
and define $\widetilde{Z}=\overline{Z_1^{\oplus m_1}\oplus\cdots \oplus Z_r^{\oplus m_r}}$. 
\begin{enumerate}
    \item[(a)] If $\mathcal{M}(Z)^{ss}_{\theta}$ is an irreducible component of $\mathcal{M}(A, \textbf{d})^{ss}_{\theta}$, then 
    $$\mathcal{M}(\widetilde{Z})^{ss}_{\theta}=\mathcal{M}(Z)^{ss}_{\theta}.$$
    \item[(b)] If $Z_1$ is an orbit closure, then $$\mathcal{M}(\overline{Z_1^{\oplus m_1}\oplus\cdots\oplus Z_r^{\oplus m_r}})^{ss}_{\theta}\simeq\mathcal{M}(\overline{Z_2^{\oplus m_2}\oplus\cdots\oplus Z_r^{\oplus m_r}})^{ss}_{\theta}.$$
    \item[(c)] Assume now that none of the $Z_i$ are orbit closures. Then there is a natural morphism 
    $$\psi: S^{m_1}(\mathcal{M}(Z_1)^{ss}_{\theta})\times \cdots\times S^{m_r}(\mathcal{M}(Z_r)^{ss}_{\theta})\rightarrow \mathcal{M}(\widetilde{Z})^{ss}_{\theta}$$ which is finite and birational. In particular, if $\mathcal{M}(\widetilde{Z})^{ss}_{\theta}$ is normal then $\psi$ is an isomorphism.
\end{enumerate}
\end{theorem}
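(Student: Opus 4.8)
The plan is to build everything on the description, recalled above, of $\mathcal{M}(Z)^{ss}_{\theta}$ as the set of isomorphism classes of $\theta$-polystable representations occurring in $Z^{ss}_{\theta}$, together with the observation that a direct sum of $\theta$-polystable representations is again $\theta$-polystable. All three parts revolve around this direct-sum operation, which descends to a ``multiplication'' map on moduli: parts (a) and (b) trim $Z$ down to the essential factors assembled in $\widetilde{Z}=\overline{Z_1^{\oplus m_1}\oplus\cdots\oplus Z_r^{\oplus m_r}}$, and part (c) analyzes the resulting map $\psi$. In particular, (b) lets me assume from the outset that no $Z_i$ is an orbit closure, which is exactly the standing hypothesis of (c).

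For part (a), I would first observe that $\mathcal{M}(Z)^{ss}_{\theta}$ and $\mathcal{M}(\widetilde{Z})^{ss}_{\theta}$ are both irreducible closed subvarieties of $\mathcal{M}(A,\textbf{d})^{ss}_{\theta}$, being images of the irreducible closed $\text{GL}(\textbf{d})$-invariant sets $Z^{ss}_{\theta}$ and $\widetilde{Z}^{ss}_{\theta}$ under the quotient map. By Definition \ref{def:StableDecomp}, a general $M\in Z^{ss}_{\theta}$ has $\theta$-polystable representative lying in $Z_1^{\oplus m_1}\oplus\cdots\oplus Z_r^{\oplus m_r}\subseteq\widetilde{Z}^{ss}_{\theta}$, so the images of $Z^{ss}_{\theta}$ and $\widetilde{Z}^{ss}_{\theta}$ share a dense set of points and hence have the same closure; the assumption that $\mathcal{M}(Z)^{ss}_{\theta}$ is an irreducible component pins this common closure down to $\mathcal{M}(Z)^{ss}_{\theta}$, giving the equality. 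For part (b), if $Z_1=\overline{\text{GL}(\textbf{d}_1)\cdot M_1}$ is the orbit closure of a $\theta$-stable $M_1$, then $\mathcal{M}(Z_1)^{ss}_{\theta}$ is a single point and every polystable representation classified by $\mathcal{M}(\overline{Z_1^{\oplus m_1}\oplus\cdots})^{ss}_{\theta}$ contains the rigid summand $M_1^{\oplus m_1}$; splitting off this fixed summand, and using that the $S$-equivalence class is determined by the remaining $\theta$-stable factors, identifies the moduli space with $\mathcal{M}(\overline{Z_2^{\oplus m_2}\oplus\cdots})^{ss}_{\theta}$.

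The heart is part (c). I would construct $\psi$ starting from the block-diagonal direct-sum morphism, which embeds $\prod_{i=1}^r \big((Z_i)^{ss}_{\theta}\big)^{\times m_i}$ into $\widetilde{Z}^{ss}_{\theta}$ equivariantly for the block-diagonal inclusion $\prod_{i=1}^r \text{GL}(\textbf{d}_i)^{\times m_i}\hookrightarrow \text{GL}(\textbf{d})$. Passing to GIT quotients yields a morphism $\prod_{i=1}^r \big(\mathcal{M}(Z_i)^{ss}_{\theta}\big)^{\times m_i}\to \mathcal{M}(\widetilde{Z})^{ss}_{\theta}$, and since $\oplus$ is commutative up to isomorphism this map is invariant under the $\prod_i S_{m_i}$-action permuting equal factors, so it descends to the desired natural morphism $\psi\colon \prod_{i=1}^r S^{m_i}\big(\mathcal{M}(Z_i)^{ss}_{\theta}\big)\to \mathcal{M}(\widetilde{Z})^{ss}_{\theta}$. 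To see $\psi$ is birational, I note that a general point of the target is the class of a polystable $\widetilde{M}=\bigoplus_{i,j}M_{ij}$ with each $M_{ij}\in Z_i$ a $\theta$-stable representation (a simple object of the semistable category); as the $Z_i$ are pairwise distinct $\theta$-stable components, generic stable summands from different $Z_i$ are non-isomorphic, so Krull-Schmidt uniqueness recovers both the multiset of summands and its partition into the $r$ blocks. Thus $\psi$ is generically injective, and it is dominant by the defining property of the $\theta$-stable decomposition, so in characteristic zero $\psi$ is birational.

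It remains to prove $\psi$ is finite. Each $\mathcal{M}(Z_i)^{ss}_{\theta}$ is projective, and projectivity is preserved under products and symmetric quotients, so both source and target are projective and $\psi$ is proper. Its fibers are finite: writing a fixed polystable $\widetilde{M}$ as $\bigoplus_{i,j}M_{ij}$ with $M_{ij}\in Z_i$ amounts to partitioning the finite, Krull-Schmidt-determined multiset of stable summands of $\widetilde{M}$ into blocks compatible with the components $Z_i$, of which there are only finitely many. A proper morphism with finite fibers is finite, and a finite birational morphism onto a normal variety is an isomorphism by Zariski's Main Theorem, which gives the final clause. The hard part will be verifying that $\psi$ is a genuine morphism rather than a mere set map: one must check that the direct-sum map descends compatibly along both GIT quotients with respect to the linearizations determined by $\theta$, and that semistability and the passage to closed orbits are respected by $\oplus$. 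A secondary subtlety is the finite-fiber bound when distinct components $Z_i$ overlap or share a dimension vector, where one must still argue that only finitely many block-partitions of the summand multiset occur.
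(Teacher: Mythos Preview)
The paper does not prove this theorem at all: it is quoted verbatim from \cite[Theorem 1]{ModDec} and used as a black box in the proof of Theorem~\ref{thm:projmod}. There is therefore no ``paper's own proof'' against which to compare your proposal.

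That said, your sketch is a reasonable outline of how the result is actually established in \cite{ModDec}. The construction of $\psi$ via the equivariant direct-sum map, descent through the GIT quotients and the $\prod_i S_{m_i}$-action, birationality from Krull--Schmidt uniqueness of $\theta$-stable summands, and finiteness from properness plus quasi-finiteness are all the correct ingredients. Your own caveat at the end is well placed: the genuinely technical step in \cite{ModDec} is precisely verifying that the direct-sum map descends to a \emph{morphism} of GIT quotients compatible with the $\theta$-linearizations, which requires tracking semi-invariants rather than just closed points. Your part (a) argument is also essentially right, though note the logic runs in one direction only: one shows $\mathcal{M}(Z)^{ss}_{\theta}\subseteq\mathcal{M}(\widetilde{Z})^{ss}_{\theta}$ by closure, and then the irreducible-component hypothesis forces equality since $\mathcal{M}(\widetilde{Z})^{ss}_{\theta}$ is itself irreducible.
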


The irreducible components of the moduli space 
$\mathcal{M}(A, \textbf{d})^{ss}_{\theta}$ are all of the form $\mathcal{M}(Z)^{ss}_{\theta}$
with $Z$ a $\theta$-semistable irreducible component of $\text{rep}_Q(I, \textbf{d})$,
so the theorem takes all irreducible components of $\mathcal{M}(A, \textbf{d})^{ss}_{\theta}$ into account.   

\section{Background on Tame Algebras}
\subsection{Moduli spaces of tame algebras}

Throughout this section, $A=kQ/I$ will be a finite dimensional tame algebra, $\textbf{d}\in\mathbb{N}^{Q_0}$ a dimension vector and $\text{rep}_Q(I, \textbf{d})$ the corresponding representation variety.

As mentioned in the introduction, Conjecture \ref{conj:conjecture} is a problem of
interest that is known to hold for certain classes of tame algebras. While the goal of this paper is to prove the conjecture holds for clannish algebras, before focusing specifically on clannish algebras, we use this section to illustrate properties of irreducible components of representation varieties of tame algebras that are helpful in understanding moduli spaces of tame algebras. 

Despite the class of tame algebras being quite nebulous, the fact that one can
parametrize all but finitely many isoclasses of indecomposable representations of $Q$
of dimension vector $\textbf{d}$ by a finite number of one-parameter families allows
for the following
description of indecomposable irreducible components $Z\subseteq \text{rep}_Q(I, \textbf{d})$.
\begin{theorem}\cite[Lemma 2.3]{gentle}\cite[Theorem 3.1]{GLS}\label{thm:tameirreducible}
Let $A=kQ/I$ be a finite-dimensional tame algebra, and let $Z$ be an indecomposable irreducible component of $\text{rep}_Q(I, \textbf{d})$. Then 
\begin{equation}
    c_A(Z):=\min\{\dim(Z)-\dim\mathcal{O}_M\,|\, M\in Z\}\in\{0, 1\}.
\end{equation}
Furthermore,
\begin{itemize}
\item $c_A(Z)=0$ if and only if $Z$ contains an indecomposable representation $M$ with $$Z=\overline{\mathcal{O}_M},$$
\item $c_A(Z)=1$ if and only if $Z$ contains a rational curve $C$ such that the points of $C$ are pairwise non-isomorphic indecomposable representations with $$Z=\overline{\bigcup_{M\in C}\mathcal{O}_M}.$$
\end{itemize}
\end{theorem}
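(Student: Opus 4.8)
The plan is to reinterpret $c_A(Z)$ as the generic codimension of a $\text{GL}(\textbf{d})$-orbit inside $Z$, and then to bound that codimension using the defining parametrization property of tame representation type. First I would record that the minimum defining $c_A(Z)$ is attained on a dense open subset of $Z$. Since $\dim\text{End}(M)$ is upper semicontinuous in $M$, equation (\ref{eqn:orbitstab}) shows that $\dim\mathcal{O}_M=\dim\text{GL}(\textbf{d})-\dim\text{End}(M)$ is lower semicontinuous, so $\dim Z-\dim\mathcal{O}_M$ is upper semicontinuous and attains its minimum $c_A(Z)$ on a dense open subset $U\subseteq Z$; equivalently, $c_A(Z)=\dim Z-\dim\mathcal{O}_M$ for a general $M\in Z$. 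Because $Z$ is closed and, being a component of the $\text{GL}(\textbf{d})$-invariant variety $\text{rep}_Q(I,\textbf{d})$ with $\text{GL}(\textbf{d})$ connected, is itself $\text{GL}(\textbf{d})$-invariant, we have $\mathcal{O}_M\subseteq Z$ for every $M\in Z$, whence $\dim\mathcal{O}_M\leq\dim Z$ and $c_A(Z)\geq 0$ at once.

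The substance of the statement is the upper bound $c_A(Z)\leq 1$, and this is where tameness enters. Shrinking $U$ if necessary, I may assume every $M\in U$ is indecomposable (using that $Z$ is an indecomposable component) and that $U$ avoids the finitely many exceptional indecomposables of dimension vector $\textbf{d}$. By Drozd's theorem on tame algebras, all remaining indecomposables of dimension $\textbf{d}$ are isomorphic to members of finitely many one-parameter families $M_i\otimes_{k[t]}k[t]/(t-\lambda)$ with $\lambda\in k$. Since $U$ is irreducible, the assignment sending a point to the index of the family containing it is constructible and hence constant on a dense open subset; after a final shrinking of $U$, every point is isomorphic to a member of a single family $M_{i_0}$, with $\lambda$ ranging over an open subset of $\mathbb{A}^1$. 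Thus the isomorphism classes occurring in $U$ are parametrized by at most one parameter, forcing $\dim U\leq\dim\mathcal{O}_M+1$ for general $M$, that is, $c_A(Z)\leq 1$.

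With $c_A(Z)\in\{0,1\}$ established, I would read off the dichotomy. If $c_A(Z)=0$, then a general $M$ has $\dim\mathcal{O}_M=\dim Z$; as $\overline{\mathcal{O}_M}$ is a closed irreducible subset of the irreducible variety $Z$ of full dimension, it equals $Z$, and $M$ is indecomposable, giving the first bullet. If $c_A(Z)=1$, the single family $M_{i_0}$ yields the rational curve $C=\{M_{i_0}\otimes_{k[t]}k[t]/(t-\lambda)\}$; generically its members are pairwise non-isomorphic indecomposables, and sweeping out their orbits fills a dense subset of $Z$, so $Z=\overline{\bigcup_{M\in C}\mathcal{O}_M}$. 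The converse implication in each bullet is an immediate dimension count: a dense orbit forces $c_A(Z)=0$, whereas a one-parameter family of pairwise distinct orbits rules out a dense orbit and hence forces $c_A(Z)=1$.

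I expect the main obstacle to be the tameness step in the second paragraph: invoking Drozd's parametrization precisely, checking that a dense open subset of $Z$ is captured by a \emph{single} irreducible one-parameter family so that $C$ is genuinely a rational curve with generically injective parametrization, and verifying that discarding the finitely many exceptional indecomposables does not shrink $U$ below density. The semicontinuity bookkeeping and the final dimension counts are routine by comparison.
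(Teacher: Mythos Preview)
The paper does not prove this theorem at all: it is stated with citations to \cite[Lemma 2.3]{gentle} and \cite[Theorem 3.1]{GLS} and then used as a black box, so there is no ``paper's own proof'' to compare your proposal against. Your sketch is a reasonable outline of the standard argument behind those cited results---semicontinuity to identify $c_A(Z)$ as the generic orbit codimension, followed by the tame parametrization to bound it by $1$---and you have correctly flagged the delicate point, namely pinning down that a dense open piece of $Z$ is swept out by a \emph{single} rational one-parameter family with generically distinct isoclasses.
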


In conjunction with Theorem \ref{thm:tameirreducible}, Equation (\ref{eqn:orbitstab}) gives two possible equations for the dimension of an indecomposable irreducible component $Z\subset \text{rep}_Q(I, \textbf{d})$:
\begin{align}
    \text{When } Z=\overline{\mathcal{O}_M}:&\, \dim Z = \dim\text{GL}(\textbf{d})-\dim\text{End}(M)<\dim\text{GL}(\textbf{d})\label{eqn:dimcase1}\\
    \text{When } Z=\displaystyle{\overline{\bigcup_{M\in C}\mathcal{O}_M}}:&\,\dim Z=\dim\text{GL}(\textbf{d})-\dim \text{End}(M)+1\leq \dim\text{GL}(\textbf{d})\label{eqn:dimcase2}
\end{align}

The next lemma generalizes the bound on dimension established in the above equations for indecomposable irreducible components to arbitrary irreducible components of representation varieties for tame algebras.

\begin{lemma}\label{lem:dim}
If $Z\subseteq\text{rep}_Q(I, \textbf{d})$ is an irreducible component, then $\dim Z\leq \dim \text{GL}(\textbf{d})$.
\end{lemma}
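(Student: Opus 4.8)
The plan is to reduce to the indecomposable case, which is already handled by Equations (\ref{eqn:dimcase1}) and (\ref{eqn:dimcase2}), and then to track how dimensions behave under the direct-sum decomposition (\ref{eqn:irreducibledecomp}). First I would write $Z=\overline{Z_1\oplus\cdots\oplus Z_l}$ with each $Z_i\subseteq\text{rep}_Q(I,\textbf{d}_i)$ an indecomposable irreducible component and $\sum_i\textbf{d}_i=\textbf{d}$. Since $A$ is tame, Theorem \ref{thm:tameirreducible} applies to each $Z_i$, giving $c_A(Z_i)\in\{0,1\}$; equivalently, for a general point $M_i\in Z_i$ (where the orbit dimension is maximal) one has $\dim Z_i-\dim\mathcal{O}_{M_i}=c_A(Z_i)\le 1$. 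This is the only place tameness is used, and it is essential: without the bound $c_A(Z_i)\le 1$ the argument below collapses.

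Next I would realize $Z$ as the closure of the image of the direct-sum morphism
\[
\mu:\text{GL}(\textbf{d})\times Z_1\times\cdots\times Z_l\to\text{rep}_Q(I,\textbf{d}),\qquad (g,(N_i)_i)\mapsto g\cdot(N_1\oplus\cdots\oplus N_l),
\]
whose image is exactly the constructible set $Z_1\oplus\cdots\oplus Z_l$. For a general point $M=M_1\oplus\cdots\oplus M_l$ of the image one then has $\dim Z=\dim\text{GL}(\textbf{d})+\sum_i\dim Z_i-\dim\mu^{-1}(M)$. The fiber is computed by projecting $\mu^{-1}(M)\to\prod_i Z_i$ via $(g,(N_i))\mapsto(N_i)$: the fiber of this projection over a tuple $(N_i)$ with $\bigoplus_i N_i\cong M$ is a coset of $\text{Stab}_{\text{GL}(\textbf{d})}(M)=\text{Aut}(M)$, of dimension $\dim\text{End}(M)$, while its image consists of those tuples. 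Combining this with (\ref{eqn:orbitstab}) and $\dim Z_i-\dim\mathcal{O}_{M_i}=c_A(Z_i)$ yields
\[
\dim Z=\dim\text{GL}(\textbf{d})+\sum_{i=1}^l c_A(Z_i)-\dim\text{End}(M).
\]

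To conclude I would bound the two correction terms against each other. The $l$ projections of $M=\bigoplus_i M_i$ onto its summands are linearly independent elements of $\text{End}(M)$, so $\dim\text{End}(M)\ge l$, whereas $\sum_i c_A(Z_i)\le l$ since each summand is at most $1$. Substituting gives $\dim Z\le\dim\text{GL}(\textbf{d})+l-l=\dim\text{GL}(\textbf{d})$, as desired.

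I expect the fiber computation to be the main obstacle. One must show that for a general $M$ the image of the projection $\mu^{-1}(M)\to\prod_i Z_i$ has dimension exactly $\sum_i\dim\mathcal{O}_{M_i}$. This is where Krull--Schmidt enters: a general point of each indecomposable component $Z_i$ is indecomposable, so in any decomposition $\bigoplus_i N_i\cong M$ each $N_i$ must be isomorphic to one of the indecomposable summands of $M$, and the constraint $\bigoplus_i N_i\cong M$ forces a bijection between the multisets $\{N_i\}$ and $\{M_j\}$. When some components coincide ($Z_i=Z_j$) this matching is only well defined up to a finite permutation ambiguity, so the image breaks into finitely many pieces; crucially, each piece has the form $\prod_i\mathcal{O}_{M_{\tau(i)}}$ for a bijection $\tau$, and since $\sum_i\dim\mathcal{O}_{M_{\tau(i)}}=\sum_j\dim\mathcal{O}_{M_j}$ for every $\tau$, all pieces share the dimension $\sum_i\dim\mathcal{O}_{M_i}$. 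Thus the permutation ambiguity does not affect the dimension count, and the formula above holds.
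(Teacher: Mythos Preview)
Your proof is correct and follows essentially the same approach as the paper: both reduce to the decomposition $Z=\overline{Z_1\oplus\cdots\oplus Z_l}$, arrive at the identity $\dim Z=\dim\text{GL}(\textbf{d})-\dim\text{End}(M)+\sum_i c_A(Z_i)$ for a general $M$, and conclude via $\dim\text{End}(M)\ge l\ge\sum_i c_A(Z_i)$. The only difference is that the paper asserts this identity in one line (``$Z$ has a dense $k$-parameter family'' with $k=\sum_i c_A(Z_i)$), whereas you derive it carefully through the fiber dimension of the direct-sum morphism and a Krull--Schmidt analysis; your version is more detailed but not a genuinely different route.
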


\begin{proof}
In the case where $Z$ is an indecomposable irreducible component, we appeal to Theorem \ref{thm:tameirreducible} and Equations (\ref{eqn:dimcase1}) and (\ref{eqn:dimcase2}).

Moving to the general case, we can write 
\begin{equation*}
    Z = \overline{Z_1\oplus\ldots\oplus Z_l}
\end{equation*}
where the $Z_i\subseteq\text{rep}_Q(I, \textbf{d}_i)$ are indecomposable irreducible components with $\sum_{i=1}^l \textbf{d}_i=\textbf{d}$ by (\ref{eqn:irreducibledecomp}). Of the $l$-summands above, suppose $k$ of these summands are not expressible as orbit closures. As such, $Z$ has a dense $k$-parameter family, so for a general element $M\in Z$, Equations (\ref{eqn:dimcase1}) and (\ref{eqn:dimcase2}) yields
\begin{align*}\label{eq:dimension}
    \dim Z=\dim\text{GL}(\textbf{d})\cdot M+k&= \dim\text{GL}(\textbf{d})-\dim\text{End}(M)+k\\
    &\leq\dim\text{GL}(\textbf{d})+k-l\\
    &\leq\dim\text{GL}(\textbf{d}). \qedhere
\end{align*}
\end{proof}
The above bound on dimension and the next lemma will be key in allowing us to view certain
irreducible components of clannish algebras as irreducible components of
skewed-gentle algebras. The following generalizes parts of \cite[Proposition 4.3]{ModSpBi}.

\begin{lemma}\label{lem:tameirred}
Let $A=kQ/I$ and $B=kQ/I'$ be finite dimensional tame algebras with
$I'\subseteq I$. Let $Z_i\subseteq\text{rep}_Q(I, \textbf{d}_i),
1\leq i\leq m$, be irreducible components 
satisfying:
\begin{itemize}
    \item each $Z_i$ is a Schur component;
    \item each $c_A(Z_i) = 1$;
    \item $\Hom_A(M_i, M_j) = 0$ for $i \neq j$ and general points $M_i \in Z_i$, $M_j \in Z_j$.
\end{itemize}
Letting $\textbf{d}=\sum_{i=1}^m\textbf{d}_i$, then $Z=\overline{Z_1\oplus\ldots\oplus Z_m}$ is an irreducible
component of $\text{rep}_{Q}(I', \textbf{d})$ with regard to the
closed embedding $\text{rep}_Q(I, \textbf{d})\subset
\text{rep}_{Q}(I', \textbf{d})$.
\end{lemma}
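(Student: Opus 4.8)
## Proof Strategy

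The plan is to verify that $Z = \overline{Z_1 \oplus \ldots \oplus Z_m}$ is an irreducible component of $\text{rep}_Q(I', \textbf{d})$ by a dimension-counting argument anchored by Lemma \ref{lem:dim}. Since $Z$ is irreducible (it is the closure of a direct sum of irreducible sets, hence the image of an irreducible variety under a morphism), and since $Z \subseteq \text{rep}_Q(I, \textbf{d}) \subseteq \text{rep}_Q(I', \textbf{d})$, it suffices to show $Z$ is maximal among irreducible closed subsets of $\text{rep}_Q(I', \textbf{d})$. First I would compute $\dim Z$ precisely. Because each $Z_i$ is a Schur component with $c_A(Z_i) = 1$, Equation (\ref{eqn:dimcase2}) gives $\dim Z_i = \dim \text{GL}(\textbf{d}_i) - \dim \text{End}(M_i) + 1$, and Schur-ness forces $\dim \text{End}(M_i) = 1$ for a general $M_i \in Z_i$, so $\dim Z_i = \dim \text{GL}(\textbf{d}_i)$.

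Next I would assemble these into a computation of $\dim Z$. For a general point $M = \bigoplus_i M_i$ of $Z$, the stabilizer dimension equals $\dim \text{End}_A(M)$, and using the three hypotheses—each $Z_i$ Schur so $\text{End}_A(M_i) = k$, and $\Hom_A(M_i, M_j) = 0$ for $i \neq j$—one finds $\text{End}_A(M) = \prod_i \text{End}_A(M_i)$ has dimension $m$. The direct sum carries $m$ independent one-parameter band families (one from each $c_A(Z_i) = 1$ summand), so the analogue of the dimension formula in the proof of Lemma \ref{lem:dim} yields
\begin{equation*}
\dim Z = \dim \text{GL}(\textbf{d}) - \dim \text{End}_A(M) + m = \dim \text{GL}(\textbf{d}) - m + m = \dim \text{GL}(\textbf{d}).
\end{equation*}
Thus $\dim Z = \dim \text{GL}(\textbf{d})$ exactly.

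To conclude, let $W$ be any irreducible component of $\text{rep}_Q(I', \textbf{d})$ containing $Z$. Since $B = kQ/I'$ is tame by hypothesis, Lemma \ref{lem:dim} applies to $W$ and gives $\dim W \leq \dim \text{GL}(\textbf{d})$. Combined with $Z \subseteq W$ and $\dim Z = \dim \text{GL}(\textbf{d})$, we get $\dim W \leq \dim Z$, forcing $\dim W = \dim Z$; since both are irreducible and closed with $Z \subseteq W$, we conclude $Z = W$. Hence $Z$ is itself an irreducible component of $\text{rep}_Q(I', \textbf{d})$.

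The main obstacle I anticipate is the second step: carefully justifying that $\dim \text{End}_A(M) = m$ for a general point of the direct sum, and that the number of independent parameters is exactly $m$. One must verify that the Hom-orthogonality hypothesis is genuinely enough to split $\text{End}_A(M)$ as the product $\prod_i \text{End}_A(M_i)$—i.e. that there are no extra cross-morphisms inflating the endomorphism algebra—and that the decomposition theorem (\ref{eqn:irreducibledecomp}) together with the $c_A(Z_i)=1$ condition correctly accounts for the dense $m$-parameter family on $Z$. A subtle point worth flagging is that these dimensions are computed with respect to $\text{End}_A$ (morphisms over $A = kQ/I$), while the ambient variety is $\text{rep}_Q(I', \textbf{d})$; I would note that a general point of $Z$ lies in $\text{rep}_Q(I, \textbf{d})$, so its $A$-module structure is the relevant one, and the orbit dimension $\dim \text{GL}(\textbf{d}) \cdot M$ is the same whether computed inside $\text{rep}_Q(I, \textbf{d})$ or $\text{rep}_Q(I', \textbf{d})$, since the group action and the stabilizer are identical.
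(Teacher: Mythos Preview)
Your proposal is correct and follows essentially the same route as the paper: compute $\dim Z = \dim \text{GL}(\textbf{d})$ via $\dim \text{End}(M) = m$ for a general point together with the dense $m$-parameter family, then invoke Lemma~\ref{lem:dim} for the tame algebra $B$ to conclude maximality. The paper additionally pauses to note that each individual $Z_i$ is already an irreducible component of $\text{rep}_Q(I',\textbf{d}_i)$, but this is not needed for the conclusion, and your more direct argument (together with your remarks on why $\text{End}_A$ is the relevant endomorphism algebra) is entirely sound.
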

\begin{proof}
By hypothesis, we write 
\begin{equation}
    Z=\overline{Z_1\oplus\ldots\oplus Z_m}\label{eqn:irrdirsum}
\end{equation}
where each $Z_i\subseteq \text{rep}_Q(I, \textbf{d}_i)$ is an irreducible Schur band component with $$\hom(Z_i, Z_j)=\min\{\dim\text{Hom}_{A}(M,N)\,|\, M\in Z_i, N\in Z_j\}=0$$ for all $1\leq i\neq j\leq m$.

As the $Z_i$ are Schur and are not orbit closures, by Lemma \ref{lem:dim}, it follows
$\dim Z_i=\dim \text{GL}(\textbf{d}_i)$ for each $i$. Viewing $Z_i$ inside $\text{rep}_{Q}(I',
\textbf{d}_i)$ via the closed embedding $\text{rep}_Q(I,\textbf{d}_i)\subset \text{rep}_{Q}(I',
\textbf{d}_i)$, we must have that $Z_i$ is an irreducible component of $\text{rep}_{Q}(I', \textbf{d}_i)$
as $\dim \text{GL}(\textbf{d}_i)$ is the maximal dimension for an irreducible component by Lemma
\ref{lem:dim}.

Viewing $Z$ as in (\ref{eqn:irrdirsum}), it remains to show $\dim Z=\dim \text{GL}(\textbf{d})$. By hypothesis, a general element $M\in \text{rep}_Q(I, \textbf{d})$ is a direct sum of Hom-orthogonal Schur representations; therefore, 
\begin{equation}
\dim \text{End}(M)=m=\dim \text{Stab}_{\text{GL}(\textbf{d})}(M).
\end{equation}
The orbit-stabilizer theorem gives 
\begin{equation}
    \dim \text{GL}(\textbf{d})=\dim \text{GL}(\textbf{d})\cdot M+\dim \text{Stab}_{\text{GL}(\textbf{d})}(M)=\dim \text{GL}(\textbf{d})\cdot M+m.
\end{equation}

Further, as $Z$ has a dense $m$-parameter family of distinct orbits, for a general $M\in Z$, we have 
\begin{equation}
    \dim Z=\dim\text{GL}(\textbf{d})\cdot M+m.
\end{equation}
It follows $\dim Z=\dim\text{GL}(\textbf{d})$ and so $Z$ is an irreducible component of $\text{rep}_{Q}(I', \textbf{d})$ by Lemma \ref{lem:dim}.
\end{proof}

\begin{remark}\label{remark:infinite}
    In fact, the assumption that $A$ and $B$ are finite dimensional in the previous lemma is unnecessary as long as we assume each irreducible component of $\text{rep}_Q(I', \textbf{d})$ has dimension at most $\dim \text{GL}(\textbf{d})$, which is the context of Corollory \ref{cor:clannishnormal}.
\end{remark}

We are interested in being able to view irreducible components of clannish algebras as irreducible components of skewed-gentle algebras as we are able to more easily prove irreducible components of skewed-gentle algebras are normal.
Normality is the key geometric property in understanding moduli of
representations of algebras, as evident by the below proposition, as well as Theorem \ref{Thm:DecModSp}(c).

\begin{proposition}\cite[Proposition 7]{gentle}
Let $A=kQ/I$ be a tame finite dimensional algebra and $Z\subset\text{rep}_Q(I, \textbf{d})$ an indecomposable irreducible component. Then the following hold.
\begin{itemize}
    \item For any weight $\theta\in\mathbb{Z}^{Q_0}$ with $Z^{ss}_{\theta}\neq \emptyset$, the variety $\mathcal{M}(Z)^{ss}_{\theta}$ is either a point or a projective curve.
    \item If $\theta\in\mathbb{Z}^{Q_0}$ is so that $Z^s_{\theta}\neq \emptyset$ then $\mathcal{M}(Z)^{ss}_{\theta}$ is rational. If, in addition, $Z$ is normal then $\mathcal{M}(Z)^{ss}_{\theta}$ is either a point or $\mathbb{P}^1$.
\end{itemize}
\end{proposition}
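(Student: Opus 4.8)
The plan is to reduce both statements to a single dimension count for the GIT quotient map $\pi\colon Z^{ss}_\theta \to \mathcal{M}(Z)^{ss}_\theta$, organized according to the dichotomy $c_A(Z)\in\{0,1\}$ supplied by Theorem \ref{thm:tameirreducible} together with the dimension formulas (\ref{eqn:dimcase1}) and (\ref{eqn:dimcase2}). The key preliminary observation is that the scalar matrices act trivially on $\text{rep}_Q(I,\textbf{d})$, so the $\text{GL}(\textbf{d})$- and $\text{PGL}(\textbf{d})$-orbits of a point $M$ coincide as sets, and (\ref{eqn:orbitstab}) computes their common dimension as $\dim\text{GL}(\textbf{d})-\dim\text{End}(M)$. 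Since $c_A(Z)=\min_{M\in Z}(\dim Z-\dim\mathcal{O}_M)$ is attained on a dense open subset, and $Z^{ss}_\theta$ is open and dense in $Z$ whenever nonempty, the generic orbit inside $Z^{ss}_\theta$ has dimension $\dim Z - c_A(Z)$ and $\dim Z^{ss}_\theta=\dim Z$.

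For the first bullet I would bound the dimension directly. As $\pi$ is surjective with each fiber a union of $\text{PGL}(\textbf{d})$-orbits, the generic fiber has dimension at least that of the generic orbit, so the fiber-dimension theorem for the dominant morphism $\pi$ gives
\[
\dim\mathcal{M}(Z)^{ss}_\theta=\dim Z^{ss}_\theta-\dim(\text{generic fiber})\le \dim Z-(\dim Z-c_A(Z))=c_A(Z)\le 1.
\]
Because $\mathcal{M}(Z)^{ss}_\theta$ is projective by construction and irreducible as the image of the irreducible $Z^{ss}_\theta$, it is a point when $c_A(Z)=0$ and at most a projective curve when $c_A(Z)=1$, which is exactly the claim.

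For the second bullet, assume $Z^s_\theta\neq\emptyset$, so a general point $M$ of $Z$ is $\theta$-stable, hence a simple object of the semistable category with $\text{End}(M)=k$; thus $\dim\text{End}(M)=1$ and $Z$ is a Schur component. Over the stable locus the fibers of $\pi$ are single free $\text{PGL}(\textbf{d})$-orbits of dimension $\dim\text{GL}(\textbf{d})-1$, so the inequality above becomes the equality $\dim\mathcal{M}(Z)^{ss}_\theta=\dim Z-(\dim\text{GL}(\textbf{d})-1)$; substituting (\ref{eqn:dimcase1}) and (\ref{eqn:dimcase2}) with $\dim\text{End}(M)=1$ yields dimension $0$ (a point) when $c_A(Z)=0$ and dimension $1$ when $c_A(Z)=1$. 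For rationality in the curve case I would invoke Theorem \ref{thm:tameirreducible} to write $Z=\overline{\bigcup_{M\in C}\mathcal{O}_M}$ for a rational curve $C$ of pairwise non-isomorphic indecomposables; since $S$-equivalence restricts to isomorphism on $\theta$-stable representations, the classifying map $C\dashrightarrow\mathcal{M}(Z)^{ss}_\theta$, $M\mapsto[M]$, is defined and injective on a dense open subset and dominant onto the one-dimensional target, giving $k(\mathcal{M}(Z)^{ss}_\theta)\hookrightarrow k(C)=k(t)$, whence L\"uroth's theorem forces rationality. Finally, if $Z$ is normal then the GIT quotient $\mathcal{M}(Z)^{ss}_\theta$ is normal (invariant rings of normal rings are integrally closed, and $\text{Proj}$ of a normal graded domain is normal), and a normal projective rational curve is smooth and therefore $\mathbb{P}^1$, while a zero-dimensional such quotient is a point.

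The main obstacle I anticipate is making the passage between orbit dimensions and fiber dimensions of $\pi$ fully rigorous: specifically, justifying that once $\theta$-stable points exist the generic fiber of $\pi$ is precisely a single orbit (so the inequality of the first bullet sharpens to an equality in the second) and that no further collapsing occurs. The auxiliary facts this rests on — that $\theta$-stable representations are Schur and simple in $\text{rep}_Q(I)^{ss}_\theta$, and that normality descends through GIT quotients — are standard but should be cited with care.
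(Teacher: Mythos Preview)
The paper does not prove this proposition: it is quoted verbatim from \cite[Proposition 7]{gentle} and used as a black box, so there is no in-paper argument to compare against. Your proposal is therefore being judged on its own merits.

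Your argument is sound and is essentially the standard one. The dimension bound in the first bullet is exactly right: the fiber-dimension theorem applied to the good quotient $\pi$, together with $c_A(Z)\in\{0,1\}$ from Theorem~\ref{thm:tameirreducible}, forces $\dim\mathcal{M}(Z)^{ss}_\theta\le 1$. For the second bullet, your use of the rational curve $C$ to produce a dominant rational map $C\dashrightarrow\mathcal{M}(Z)^{ss}_\theta$ and then L\"uroth is the intended mechanism; note that you do not actually need injectivity for rationality, only dominance, so that part of the argument can be trimmed. The descent of normality through the GIT quotient and the identification of a normal rational projective curve with $\mathbb{P}^1$ are standard and correctly invoked.

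One small point worth tightening: in the first bullet you do not yet know the generic point is stable, so rather than speaking of the ``generic fiber'' of $\pi$ it is cleaner to argue that every fiber of $\pi$ contains at least one closed orbit (the polystable one), and closed orbits have the smallest dimension in their fiber, hence each fiber has dimension at least $\dim Z - c_A(Z)$. This avoids any worry about whether the generic fiber is a single orbit in the strictly semistable case.
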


In combination with Theorem \ref{Thm:DecModSp}, the above proposition allows us to conclude $\mathcal{M}(Z)^{ss}_{\theta}$ is a product of projective spaces if we can prove 
$\widetilde{Z}=\overline{Z_1^{\oplus m_1}\oplus\ldots\oplus Z_r^{\oplus m_r}}$ is normal, where $m_1Z_1\dot{+}\ldots\dot{+}m_rZ_r$ is the $\theta$-stable composition of $Z$.

In order to prove the normality of irreducible components of skewed-gentle algebras, and eventually
the normality of irreducible components of clannish algebras, we will need to review gentle algebras
and their irreducible components.

\subsection{Gentle and Special Biserial Algebras}
Known affirmative examples of Conjecture \ref{conj:conjecture} include the classes of gentle algebras and special biserial algebras. While the focus of this paper is proving the conjecture for clannish algebras, the proof of Theorem \ref{thm:projmod} will generalize the main argument made in \cite{ModSpBi}, so it will be helpful to review the basic ideas utilized in that paper. We first define gentle algebras, which will be used later to define a more general class of algebras known as skewed-gentle algebras.

\begin{definition}\label{def:GentlePair}\cite{GePe}
A gentle pair is a pair $(Q, I)$ given by a quiver $Q$ and an ideal $I$ generated by paths of length two in $Q$ such that 
\begin{itemize}
    \item for each $i\in Q_0$, there are at most two arrows with source $i$, and at most two arrows with target $i$;
    \item for each arrow $\alpha:i \rightarrow j$ in $Q_1$, there exists at most one arrow $\beta$ with target $i$ such that $\beta\alpha\in I$ and at most one arrow $\beta'$ with target $i$ such that $\beta'\alpha\not\in I$;
    \item for each arrow $\alpha:i\rightarrow j$ in $Q_1$, there exists at most one arrow $\beta$ with source $j$ such that $\alpha\beta\in I$ and at most one arrow $\beta'$ with source $j$ such that $\alpha\beta'\not\in I$.
    \item the algebra $A=kQ/ I$ is finite dimensional. 
\end{itemize}
An algebra $A=kQ/I$ is said to be gentle if $(Q, I)$ is a gentle pair.
\end{definition}
For $A=kQ/I$ gentle, by \cite[\S 3]{ModSpBi}, the irreducible components of $\text{rep}_Q(I, \textbf{d})$ are all of the form 
\begin{equation}\label{eqn:IrrGentle}
    \text{rep}_Q(I, \textbf{d}, \textbf{r})=\{M\in \text{rep}_{Q}(I, \textbf{d})\,|\, \text{rank } M(a)\leq r_a, \forall a\in Q^*_1\}
\end{equation}
where $\textbf{r}=(r_a)_{a\in Q_1}$ is a maximal rank sequence, with respect to the coordinate wise order, for $\textbf{d}$. It is known that every irreducible component of the above form is locally isomorphic to an affine Schubert variety of type $A$ by \cite[Theorem 11.3]{Lusztig}, which are normal by \cite[Theorem 8]{Faltings}. 
\begin{definition}\cite{SKOW}\label{def:SpBi}
An algebra $A$ is called special biserial if it is Morita equivalent to a bound quiver algebra $kQ/\langle \mathcal{R}\rangle$, where $(Q, \mathcal{R})$ satisfies the following conditions:
\begin{enumerate}
    \item[(SB1)] For each vertex $v\in Q_0$, there are at most two arrows with head $v$ and at most two arrows with tail $v$. 
    \item[(SB2)] For any arrow $b\in Q_1$ there is at most one arrow $a\in Q_1$ with $ba\not\in \mathcal{R}$ and at most one arrow $c\in Q_1$ with $cb\not\in \mathcal{R}$.
\end{enumerate}
\end{definition}

Any special biserial algebra is tame, with the indecomposable representations being either string or band representations \cite{RingelDi, RingelButler}.
A lemma by Ringel, see \cite[\S 4]{ModSpBi} for a proof, shows any special biserial algebra can be viewed as a quotient of a gentle algebra. 

With this in mind, we have a closed embedding $\text{rep}_Q(I, \textbf{d})\xhookrightarrow{} \text{rep}_{Q'}(I', \textbf{d})$ with $A=kQ/I$ special biserial and $kQ'/I'$ gentle. Furthermore, \cite[\S 4]{ModSpBi} shows this closed embedding preserves certain irreducible components through a dimension argument. The previous observation, along with Theorem \ref{Thm:DecModSp} and the paragraph proceeding Definition \ref{def:SpBi} gives the following classification.
\begin{theorem}\cite{ModSpBi}\label{thm:SpBiMod}
Let $A$ be a special biserial algebra. Then any irreducible component of a moduli space $\mathcal{M}(A, \textbf{d})^{ss}_{\theta}$ is isomorphic to a product of projective spaces.
\end{theorem}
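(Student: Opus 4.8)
The plan is to reduce the statement, one irreducible component at a time, to the normality of irreducible components of a gentle algebra, which is already in hand: such components have the form \eqref{eqn:IrrGentle} and are normal by Lusztig's and Faltings' theorems. By the remark following Theorem \ref{Thm:DecModSp}, every irreducible component of $\mathcal{M}(A,\textbf{d})^{ss}_\theta$ is of the form $\mathcal{M}(Z)^{ss}_\theta$ for a $\theta$-semistable irreducible component $Z\subseteq\text{rep}_Q(I,\textbf{d})$. Fixing such a $Z$, I would take a $\theta$-stable decomposition $Z=m_1Z_1\dot{+}\cdots\dot{+}m_rZ_r$ (which exists by \cite[Proposition 3]{ModDec}) and set $\widetilde{Z}=\overline{Z_1^{\oplus m_1}\oplus\cdots\oplus Z_r^{\oplus m_r}}$. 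Theorem \ref{Thm:DecModSp}(a) identifies $\mathcal{M}(Z)^{ss}_\theta$ with $\mathcal{M}(\widetilde{Z})^{ss}_\theta$, so it suffices to understand $\mathcal{M}(\widetilde{Z})^{ss}_\theta$.

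Each $Z_i$ is $\theta$-stable and therefore a Schur component, so by tameness and Theorem \ref{thm:tameirreducible} we have $c_A(Z_i)\in\{0,1\}$. Using Theorem \ref{Thm:DecModSp}(b) repeatedly, I discard every summand that is an orbit closure (the case $c_A(Z_i)=0$) without changing the moduli space; if no summands remain, $\mathcal{M}(\widetilde{Z})^{ss}_\theta$ is a point and we are done, so we may assume every $Z_i$ is a band component with $c_A(Z_i)=1$. The crucial geometric step is to realize $\widetilde{Z}$ as a genuine irreducible component of a gentle representation variety. By Ringel's lemma there is a gentle algebra $kQ/I'$ with $I'\subseteq I$, and an associated closed embedding $\text{rep}_Q(I,\textbf{d})\hookrightarrow\text{rep}_Q(I',\textbf{d})$. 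A general point of $\widetilde{Z}$ is a direct sum of $\sum_i m_i$ pairwise non-isomorphic $\theta$-stable band modules: two independent general points of one band component are non-isomorphic simple objects of the $\theta$-semistable category, hence Hom-orthogonal, and the same holds across distinct components. Thus the hypotheses of Lemma \ref{lem:tameirred} are met for this full list of components-with-multiplicity, and the lemma gives $\dim\widetilde{Z}=\dim\text{GL}(\textbf{d})$, whence $\widetilde{Z}$ is an irreducible component of $\text{rep}_Q(I',\textbf{d})$. As a gentle component, $\widetilde{Z}$ is normal.

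Normality then propagates to the quotient: $\mathcal{M}(\widetilde{Z})^{ss}_\theta$ is a $\text{Proj}$ of the semi-invariant ring of the normal variety $\widetilde{Z}$, and invariants of a normal domain under the reductive group $\ker\chi_\theta$ remain normal, so $\mathcal{M}(\widetilde{Z})^{ss}_\theta$ is normal. Theorem \ref{Thm:DecModSp}(c) now applies, and the finite birational morphism
$$\psi:S^{m_1}(\mathcal{M}(Z_1)^{ss}_\theta)\times\cdots\times S^{m_r}(\mathcal{M}(Z_r)^{ss}_\theta)\longrightarrow\mathcal{M}(\widetilde{Z})^{ss}_\theta$$
is an isomorphism. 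Each $Z_i$ is a normal band component with $(Z_i)^{s}_\theta\neq\emptyset$, so \cite[Proposition 7]{gentle} shows $\mathcal{M}(Z_i)^{ss}_\theta$ is a point or $\mathbb{P}^1$; since $S^m$ of a point is a point and $S^m(\mathbb{P}^1)\cong\mathbb{P}^m$, the source of $\psi$ is a product of projective spaces, and hence so is $\mathcal{M}(Z)^{ss}_\theta\cong\mathcal{M}(\widetilde{Z})^{ss}_\theta$.

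The step I expect to be the main obstacle is showing that $\widetilde{Z}$, which carries the multiplicities $m_i$, is an irreducible \emph{component} of the gentle variety rather than a proper subvariety of one. The delicate point is the endomorphism/dimension count: one must use that a general point of $\widetilde{Z}$ splits into $\sum_i m_i$ \emph{distinct}, pairwise Hom-orthogonal band modules, so that its endomorphism algebra is $k^{\sum_i m_i}$ rather than a product of matrix algebras. This is exactly what forces $\dim\widetilde{Z}=\dim\text{GL}(\textbf{d})$ and lets Lemma \ref{lem:tameirred} certify $\widetilde{Z}$ as a component; transferring normality through the GIT quotient and identifying the factors via \cite[Proposition 7]{gentle} and $S^m(\mathbb{P}^1)\cong\mathbb{P}^m$ are then comparatively routine.
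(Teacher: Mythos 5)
Your proposal is correct and follows essentially the same route the paper takes: it matches the sketch given before Theorem \ref{thm:SpBiMod} (Ringel's lemma realizing the special biserial algebra as a quotient of a gentle algebra, the dimension argument of Lemma \ref{lem:tameirred} certifying $\widetilde{Z}$ as a component of the gentle variety, hence normal), and your handling of the multiplicities via Hom-orthogonality of non-isomorphic general points of a single band component, followed by Theorem \ref{Thm:DecModSp} and \cite[Proposition 7]{gentle} with $S^m(\mathbb{P}^1)\cong\mathbb{P}^m$, is precisely the argument the paper itself replays in its proof of Theorem \ref{thm:projmod}. No gaps to report.
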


In this paper, we generalize the above theorem, as well as the proof idea, to skewed-gentle algebras and clannish algebras. 

\subsection{Clannish Algebras} 
We next introduce two classes of finite dimensional tame algebras, skewed-gentle algebras and clannish algebras, which generalize gentle algebras and special biserial algebras, respectively. 
With $Q$ a quiver, we let $Q_1^{\text{sp}}\subset Q_1$ be a subset of loops of $Q_1$.
We call the elements of $Q_1^{\text{sp}}$ special loops.
Any arrow within $Q_1^{\text{ord}}=Q_1\backslash Q_1^{\text{sp}}$ will be called ordinary. Additionally, if $v\in Q_0$ has a special loop attached, then we call $v$ a special vertex. Otherwise, $v$ will be called ordinary.
When defining a set $\mathcal{R}$ of relations on $Q$, we always include the set of relations 
\begin{equation}\label{eqn:SpecialLoops}
\mathcal{R}^{\text{sp}}=\{e^2-e \, | \, e\in Q_1^{\text{sp}}\},
\end{equation}
so $\mathcal{R}=\mathcal{R}^{\text{sp}}\cup \mathcal{I}$ where $\mathcal{I}$ is a set of zero-relations.

\begin{definition}\label{ref:SkewGentle}\cite{BeHo}
A skewed-gentle triple $(Q, I, Q_1^{\text{Sp}})$ is the data of a quiver $Q$, an ideal $I$ generated by paths of length two in $Q$, and a subset $Q_1^{\text{sp}}$ of special loops in $Q$ such that $(Q, I +\langle e^2, e\in Q_1^{\text{sp}} \rangle)$ is a gentle pair. In this case, the algebra $A=kQ/(I+ \langle\mathcal{R}^{\text{sp}}\rangle) $ is called a skewed-gentle algebra. As a gentle algebra is finite dimensional, so too is a skewed-gentle algebra.
\end{definition}

\begin{definition}{\cite{CB2}}\label{def:Clannish}
With $\mathcal{R}=\mathcal{R}^{\text{sp}}\cup \mathcal{I}$ and $I=\langle \mathcal{R} \rangle$, the algebra $\Lambda = kQ/I$ is clannish when the following conditions hold:

\begin{itemize}
    \item[(C1)] None of the zero-relations in $\mathcal{I}$ begin or end with a special loop, or involve the square of a special loop. 
    \item[(C2)] For each vertex $v\in Q_0$, there are at most two arrows with head $v$ and at most two arrows with tail $v$. 
    \item[(C3)] For any arrow $b\in Q_1\backslash Q_1^{\text{sp}}$ there is at most one arrow $a\in Q_1$ with $ba\not\in \mathcal{I}$ and at most one arrow $c\in Q_1$ with $cb\not\in \mathcal{I}$. The arrows $a, c\in Q_1$ can be either ordinary or special. 
\end{itemize}
\end{definition}

From the definition, one sees clannish algebras are obtained from special biserial algebras by allowing vertices of $Q$ to have an idempotent loop.
A classification of the indecomposable representations over clannish algebras is given in \cite{CB2}. Just like special biserial algebras, the indecomposable representations are either string or band representations, the only difference being that we can now have indecomposable representations that correspond to words that are symmetric strings and symmetric bands. 
Furthermore, it is known that clannish algebras are tame \cite{CB2}.

In general, we do not have that a clannish algebra $\Lambda=kQ/I$ is a bound quiver algebra, for if $e\in Q_1^{\text{sp}}$ then $e^2-e\in \mathcal{R}^{\text{sp}}$ is not admissible;
however, from $Q$ and $\mathcal{R}$, we can construct a quiver $Q'$ and an ideal $I'=\langle \mathcal{R}'\rangle$ such that $\Lambda=kQ/I\cong kQ'/I'=:\Lambda'$ where $\Lambda'$ is a bound quiver algebra. The construction outlined below closely follows \cite{AB19}, and is derived by basic observations regarding the idempotent relations applied to the special loops of clannish algebras.
Namely, given any $e\in Q_1^{\text{sp}}$, we have the isomorphism 
$$k\langle e\rangle/\langle e^2-e\rangle\cong ke\times k(1-e)$$ 
given by the Chinese remainder theorem.
This isomorphism allows one to split any special vertex $v\in Q_0^{\text{sp}}$ into two ordinary vertices $v^-, v^+\in Q'^{\text{ord}}_0$, thus giving a bijection
$$|Q'_0|=|Q_0^{\text{ord}}\cup (Q_0^{\text{sp}}\times \mathbb{Z}_2)|.$$ With regards to moving from $Q$ to $Q'$, there are a few cases to consider: 
\begin{itemize}
    \item Any arrow $a\in Q_1^{\text{ord}}$ incident to ordinary vertices $v, w\in Q^{\text{ord}}_0$ will remain unchanged in $Q'_1$. 
    \item Consider a subquiver of $Q$ as below, where $v\in Q_0^{\text{ord}}$ and $w\in Q_0^{\text{sp}}$
    \begin{center}
\[ \begin{tikzpicture}[baseline=-3]
		 \node (1) at (0,0) {$v$};
		 \node (2) at (2,0) {$w$} edge[in=45, out=145, loop] node[above] {$f$} ();
		 \draw[->] (1) -- (2) node[midway, above] {$a$};
		 \end{tikzpicture}. \]
\end{center}
Upon splitting the special vertex $w$, we get two vertices $w^{+}, w^-\in Q'_0$ and two arrows $\prescript{+}{}a, \prescript{-}{} a\in Q'_1$.

\[\begin{tikzcd}
	&& {w^+} \\
	v \\
	{} && {w^-}
	\arrow["{\prescript{+}{}a}", from=2-1, to=1-3]
	\arrow["{\prescript{-}{}a}"', from=2-1, to=3-3]
\end{tikzcd}\]
    
    \item Similarly, subquivers in $Q$ of the form  
    
    \begin{center}
\[ \begin{tikzpicture}[baseline=-3]
		 \node (1) at (0,0) {$v$} edge[in=45, out=145, loop] node[above] {$f$} ();
		 \node (2) at (2,0) {$w$};
		 \draw[->] (1) -- (2) node[midway, above] {$a$};
		 \end{tikzpicture} \]
\end{center}
with $v\in Q_0^{\text{sp}}$ and $w\in Q_0^{\text{ord}}$ become     
    
    \[\begin{tikzcd}
	{v^+} && {} \\
	&& w \\
	{v^-}
	\arrow["a^-"', from=3-1, to=2-3]
	\arrow["a^+", from=1-1, to=2-3]
\end{tikzcd}\]
upon splitting the special vertex.     
    \item Lastly, subquivers of the form 
    
    \begin{center}
\[ \begin{tikzpicture}[baseline=-3]
		 \node (1) at (0,0) {$v$} edge[in=45, out=145, loop] node[above] {$f_1$} ();
		 \node (2) at (2,0) {$w$} edge[in=45, out=145, loop] node[above] {$f_2$} ();
		 \draw[->] (1) -- (2) node[midway, above] {$a$};
		 \end{tikzpicture} \]
    \end{center}
with $v,w \in Q_0^{\text{sp}}$ take the form

\[\begin{tikzcd}
	{v^+} && {w^+} \\
	\\
	{v^-} && {w^-}
	\arrow["{\prescript{+}{}a^+}", from=1-1, to=1-3]
	\arrow["{\prescript{-}{}a^-}"', from=3-1, to=3-3]
	\arrow["{\prescript{-}{}a^+}"'{pos=0.2}, from=3-1, to=1-3]
	\arrow["{\prescript{+}{}a^-}"{pos=0.2}, from=1-1, to=3-3]
\end{tikzcd}\]
upon splitting $v$ and $w$ simultaneously. 
\end{itemize}

We again follow \cite{AB19} when constructing the ideal $I'$ from $I$.
\begin{itemize}
    \item When given a relation $ba\in I$ on a subquiver \[\begin{tikzcd}
	\bullet && v && \bullet
	\arrow["a", from=1-1, to=1-3]
	\arrow["b", from=1-3, to=1-5],
\end{tikzcd}\]
if $v$ is an ordinary vertex, then we have $\prescript{\epsilon}{}ba^{\epsilon'}\in I'$ for $\epsilon,\epsilon'\in\{\emptyset, -, +\}$, where the possible superscripts are determined by whether or not the vertices on the left and right are special or ordinary. 
\item If $v$ is a special vertex in the subquiver above, then we have $(\prescript{\epsilon}{}b^{+})(\prescript{+}{}a^{\epsilon'})+(\prescript{\epsilon}{}b^{-})(\prescript{-}{}a^{\epsilon'})\in I'$ for each $\epsilon,\epsilon'\in\{\emptyset, -, +\}$ that makes sense.
\item Given a relation $bfa\in I$ on a subquiver of the form 
    \begin{center}
\[ \begin{tikzpicture}[baseline=-3]
         \node (1) at (0,0) {$w_1$};
		 \node (2) at (2,0) {$v$} edge[in=45, out=145, loop] node[above] {$f$} ();
		 \node (3) at (4,0) {$w_2$};
		 \draw[->] (1) -- (2) node[midway, above] {$a$};
		 \draw[->] (2) -- (3) node[midway, above] {$b$};
		 \end{tikzpicture} \]
\end{center}
with $w_1,w_2\in Q_0^{\text{ord}}$, we have $(b^{+})(\prescript{+}{}a)\in I'$ with $(b^{+})(\prescript{+}{}a)=(b^{-})(\prescript{-}{}a)$ from the above bullet point.
\item The other possible relations of $I'$ can be derived using combinations of the three cases demonstrated above.
\end{itemize}

\begin{remark}
Keeping the same notation as above, we will say $\Lambda=kQ/I$ is the standard presentation of the clannish algebra while $\Lambda'=kQ'/I'$ is the admissible presentation of the clannish algebra. 
\end{remark}

\begin{remark}\label{remark:presentations}
Despite $\Lambda$ and $\Lambda'$ being isomorphic as algebras, their corresponding representation varieties are generally not isomorphic, since these depend on the specific presentation. For example, if one defines

\begin{center}
\[Q = \begin{tikzpicture}[baseline=-3]
		 \node (1) at (0,0) {$v$} edge[in=45, out=145, loop] node[above] {$f$} ();
    \end{tikzpicture}\] 
\end{center}
with $I=\langle f^2-f \rangle$ and $\textbf{d}=(2)$ then 
$$\text{rep}_Q(I, \textbf{d})=\{M\in \text{Mat}_{2\times 2}\,|\, M^2=M\}.$$
Meanwhile, the split vertex quiver $Q'$ consists of two vertices without any arrows, which yields a representation variety consisting of just one point. In the appendix, we briefly explore how the geometry of the two presentations interact.
\end{remark}

The following example demonstrates that we can't always view a clannish algebra as a quotient of a gentle algebra like we can for special biserial algebras.

\begin{example}

Consider $\Lambda=KQ/I$ where $Q$ is the quiver given by 

\begin{center}
\[ \begin{tikzpicture}[baseline=-3]
		 \node (1) at (0,0) {1};
		 \node (2) at (2,0) {2} edge[in=45, out=145, loop] node[above] {$f$} ();
		 \node (3) at (4,0) {3};
		 \node (4) at (6,0) {4};
		 \path[->] (1) edge node[above] {$a$} (2)  (2) edge node[above] {$b$} (3)  (3) edge node[above] {$c$} (4);
		 \end{tikzpicture} \]
\end{center}

\noindent and $I=\langle ba, cbfa, f^2-f \rangle$. The algebra $\Lambda$ is isomorphic to the algebra $\Lambda'=kQ'/I'$ given by 

\begin{center}
\begin{tikzpicture}
\node (a) at (0,0) {$1$}
node (b) at (2,-2) {$2^-$}
node (c) at (2,2) {$2^+$}
node (d) at (4,0) {$3$}
node (f) at (-1,0) {$Q':=$}
node (e) at (6,0) {$4$};
\draw[thick, ->] (a) -- (b) node[midway, below left] {$a^-$};
\draw[thick, ->] (a) -- (c) node[midway, above left] {$a^+$};
\draw[thick, ->] (b) -- (d) node[midway, below right] {$b^-$};
\draw[thick, ->] (c) -- (d) node[midway, above right] {$b^+$};
\draw[thick, ->] (d) -- (e) node[midway, above] {$c$};
\end{tikzpicture}
\end{center}
and relations $I'=\langle b^+a^{+}+b^-a^-, cb^+a^+ \rangle$. One sees both $cb^{+}$ and $cb^{-}$ are nontrivial in this setting, which is not a possibility for an algebra that is a quotient of a gentle algebra.

\end{example}

\section{Proof of Main Theorem}
Let $\Lambda=kQ/I$ be a clannish algebra. We would like to understand the geometry of clannish algebras through skewed-gentle algebras; however, not every clannish algebra is a quotient of a skewed-gentle algebra, so instead, we construct an infinite dimensional complete skewed-gentle cover, which will serve a similar purpose to the complete gentle algebras used in \cite{ModSpBi}.

\begin{definition}\label{def:completeskew}
    Let $Q^*$ be a quiver with a distinguished set $Q_1^{*,\text{sp}}$ of special loops. Let $R^*$ be an ideal of relations defined by $$R^*=R^*_{\text{ord}}+\{e^2-e\,|\, e\in Q_1^{*,\text{sp}}\}$$ where $R_{\text{ord}}^*$ is a finite set of monomial length two zero-relations including only ordinary arrows.

    We call $(Q^*, R^*, Q_1^{*,\text{sp}})$ complete skewed-gentle if:
    \begin{enumerate}
        \item Every vertex has exactly two incoming arrows and two outgoing arrows, where a special loop counts as one incoming and one outgoing arrow.
        \item Every ordinary arrow belongs to a unique effective oriented cycle of ordinary arrows in the sense used for complete gentle algebras. 
        \item For every ordinary arrow $a$ there exists exactly one ordinary arrow $b$ with $ab\in R^*_{\text{ord}}$ and exactly one ordinary arrow $c$ with $ca\in R^*_{\text{ord}}$.
        \item Special loops satisfy only $e^2=e$ and no zero-relation starts or ends with a special loop. 
    \end{enumerate}
\end{definition}

\begin{lemma}\label{lem:clantoskewgentle}
Let $\Lambda=kQ/I$ be a clannish algebra in its standard presentation. Then there exists a complete skewed-gentle algebra $\Gamma=kQ^*/\langle \mathcal{R}^* \rangle$ with $Q_0^*=Q_0$ containing $Q$ as a subquiver, such that $\Lambda$ is a quotient of $\Gamma$.
\end{lemma}

\begin{proof}
Let $\Lambda=kQ/I$ be clannish. We first construct $Q^*$ and $\mathcal{R}^*$ and then we define the quotient map $\overline\varphi:kQ^*/\langle \mathcal{R}^* \rangle \rightarrow \Lambda$. We keep the same vertex set $Q_0^*=Q_0$ and all of the special loops $Q_1^{*,\text{sp}}=Q_1^{\text{sp}}$. We then add ordinary arrows until the following conditions hold: 
\begin{itemize}
    \item If $v$ is ordinary, then exactly two arrows end at $v$ and exactly two arrows start at $v$.
    \item If $v$ is special, then exactly one ordinary arrow ends at $v$ and exactly one ordinary arrow starts at $v$.
\end{itemize}

When defining zero-relations at a special vertex $s$ with special loop $e$ in $Q^*$, there exists exactly one ordinary arrow $\alpha$ ending at $s$ and exactly one arrow $\beta$ starting at $s$. As relations cannot begin or end with a special loop, we must have $\beta\alpha\in\mathcal{R}^*$. We do not impose any zero-relation on $e$, only the idempotent relation $e^2-e\in \mathcal{R}^*$.

At ordinary vertices, we use the usual completion procedure from the special biserial case. Namely, after adding ordinary arrows so that exactly two ordinary arrows enter and exactly two ordinary arrows leave the vertex, the clannish condition gives a partial matching of incoming arrows with outgoing arrows corresponding to the nonzero original length-two paths. We extend this partial matching to a bijection, and impose zero-relations on the complementary length-two paths \cite{RingelStrings}.

On an ordinary vertex $v$, there are exactly two arrows $\alpha_1, \alpha_2$ ending at $v$ and exactly two arrows 
$\beta_1, \beta_2$ starting at $v$. For every ordinary vertex $v$, we define $$M_v\subseteq\{\beta_j\alpha_i\, |\, i,j\in \{1,2\}\}$$ such that if $\beta_j\alpha_i\in M_v$ and both $\alpha_i, \beta_j$ are arrows in $Q_1$, then $\beta_j\alpha_i\in I$. We then put each path in $M_v$ into $\mathcal{R}^*$. Doing this independently at every ordinary vertex defines the ordinary length-two zero-relations of $\mathcal{R}^*$.

We define an algebra homomorphism $\varphi:kQ*\rightarrow \Lambda$ on arrows by \[\varphi(a)=\begin{cases}
    a \quad a\in Q_1\\
    0 \quad a\in Q_1^*\setminus Q_1
\end{cases}
\] and on vertices $\varphi$ is the identity. 

Under $\varphi$, every relation of $\mathcal{R}^*$ maps to zero in $\Lambda$. If $e$ is a special loop, then $\varphi(e^2-e)=e^2-e=0$ in $\Lambda$, by definition of a clannish algebra. Second, if $\beta\alpha\in\mathcal{R}^*$ is a length-two zero-relation, then if either $\alpha$ or $\beta$ is a newly added arrow, then $\varphi(\beta\alpha)=0.$ If both $\alpha$ and $\beta$ are original arrows, then by the construction of $\mathcal{R}^*$, $\beta\alpha\in I$, so again $\varphi(\beta\alpha)=0$ in $\Lambda$. It follows that $\langle\mathcal{R}^*\rangle\subseteq \ker \varphi$. Hence $\varphi$ induces a surjective algebra homomorphism $$\overline{\varphi}:\Gamma=kQ^*/\langle \mathcal{R}^* \rangle\rightarrow \Lambda.$$ Surjectivity is immediate as every vertex and every arrow of $Q$ lies in the image of $\varphi$. Thus $\Lambda$ is a quotient of the complete skewed-gentle algebra $\Gamma$.
\end{proof}

Due to the idempotent relations imposed on the quiver by special loops, in order to prove irreducible components of skewed-gentle algebras are normal, we will first show varieties of idempotent matrices are smooth.  

Establishing notation, we define 
\begin{equation}\label{eqn:idempotentdef}
E_n=\{M\in \text{Mat}_{n\times n}(k)\,|\, M^2=M\}
\end{equation}
to be the affine variety of $n\times n$ idempotent matrices.
One has that every $M\in E_n$ is conjugate to some $A_r, 0\leq r\leq n$, where \[A_r=
\left(\begin{array}{@{}c|c@{}}
    I_r & \bigzero \\ \hline
    \bigzero & \bigzero \\ 
  \end{array}\right).
\] 
For a fixed rank $r$, we write $$E_n(r)=GL_n(k)\cdot A_r.$$ In other words, $E_n$ is a priori the disjoint union of sets 
\begin{equation}\label{eqn:idempotents}
    E_n=\bigsqcup_{r=0}^n E_n(r)=\bigsqcup_{r=0}^n \text{GL}_n(k)\cdot A_r.
\end{equation}
Below we show the above is in fact a disjoint union of the connected components of the variety. 

\begin{lemma}\label{lem:idempotentsmooth}
The variety of idempotent matrices, $E_n$, is smooth.
\end{lemma}
\begin{proof}
Using the notation established above, we observe each $\text{GL}_n(k)\cdot A_r$ is an orbit, thus smooth. It remains showing each $\text{GL}_n(k)\cdot A_r$ is a distinct connected component in order to conclude $E_n$ is smooth. 

To prove each $\text{GL}_n(k)\cdot A_r$ is a connected component, it suffices to show they are each closed in the Zariski topology. This follows by realizing 
\begin{equation}
    \text{GL}_n(k)\cdot A_r=\{M\in \text{Mat}_{n\times n}(k)\,|\, M^2=M,\ \text{Tr}(M)=r\}.
\end{equation}
As an intersection of closed sets, we conclude $\text{GL}_n(k)\cdot A_r$ is closed for each $0\leq r\leq n$, and the lemma follows.
\end{proof}

\begin{lemma}\label{lem:idempotentdim}
    For a fixed rank $r$, $$\dim E_n(r)=2r(n-r)\leq \frac{n^2}{2}.$$
\end{lemma}
\begin{proof}
    Write $k^n\cong U\oplus W$ where $\dim U=r,$ $\dim W=n-r$ and $A_r$ is the projection onto $U$ along $W$. An element $g\in \text{GL}_n$ stabilizes $A_r$ under conjugation if and only if $gA_rg^{-1}=A_r$, or rather $gA_r=A_rg$. In block form, we may write $$g=\begin{pmatrix}
        P && Q\\
        R && S
    \end{pmatrix}$$ thus $$gA_r=\begin{pmatrix}
        P && 0\\
        R && 0
    \end{pmatrix}\qquad\text{and} \qquad A_rg=\begin{pmatrix}
        P && Q\\
        0 && 0
    \end{pmatrix}$$ which yields $$\text{Stab}_{\text{GL}_n}(A_r)=\left\{\begin{pmatrix}
        P && 0\\
        0 && S
    \end{pmatrix}\, |\, P\in \text{GL}_r, S\in \text{GL}_{n-r}\right\}.$$ By orbit-stabilizer, we have \begin{align*}
        \dim E_n(r)&=\dim \text{GL}_n-\dim (\text{GL}_r\times \text{GL}_{n-r})\\
        &= n^2 - (r^2+(n-r)^2)\\
        &= 2r(n-r)\\
        &= \frac{n^2}{2}-2(r-\frac{n}{2})^2\\
        &\leq \frac{n^2}{2}
    \end{align*}
\end{proof}

\begin{proposition}\label{lem:skewgentlenormal}
Let $\Gamma=kQ^*/\langle\mathcal{R}^*\rangle$ be a complete skewed-gentle algebra and $\textbf{d}$ be a dimension vector. If $Z\subset \text{rep}_{Q^*}(\mathcal{R^*},\textbf{d})$ is an irreducible component, then $Z$ is normal.
\end{proposition}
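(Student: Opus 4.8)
The plan is to exploit the $\rho$-block decomposition from Equation~(\ref{eqn: rho-block}), which expresses $\text{rep}_Q(J,\textbf{d})$ as a product of representation varieties over the $\rho$-blocks of $\Lambda'$. Since normality is preserved under taking products (a product of normal varieties over an algebraically closed field is normal), and since an irreducible component of a product is a product of irreducible components of the factors, it suffices to prove that each irreducible component of the representation variety of each $\rho$-block is normal. Thus the first step is to reduce, via (\ref{eqn: rho-block}), to analyzing a single $\rho$-block of the skewed-gentle algebra.

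The key structural observation is that a skewed-gentle algebra is, by Definition~\ref{ref:SkewGentle}, built from the underlying gentle data $(Q, J')$ together with the idempotent relations $\mathcal{R}^{\text{sp}}=\{e^2-e \mid e\in Q_1^{\text{sp}}\}$ on the special loops. Because special loops are loops at a single vertex and, by the skewed-gentle conditions, are not involved in any length-two path relations, the matrix $M(e)$ associated to a special loop $e$ is constrained \emph{only} by the idempotent relation $M(e)^2=M(e)$ and is otherwise decoupled from the remaining arrow-matrices. Consequently I expect each $\rho$-block variety to factor, up to isomorphism, as a product of a representation variety of the underlying gentle block (whose irreducible components are known by (\ref{eqn:IrrGentle}) to be locally isomorphic to normal affine Schubert varieties of type $A$ via \cite[Theorem 11.3]{Lusztig} and \cite[Theorem 8]{Faltings}) with copies of the idempotent varieties $E_n$, one for each special loop at the relevant vertex.

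With this factorization in hand, the second step invokes Lemma~\ref{lem:idempotentsmooth}: each $E_n$ is smooth, hence normal. The gentle factors are normal by the cited Schubert-variety results. Therefore each irreducible component of the $\rho$-block variety, being (an open-cover-local version of) a product of a normal Schubert-type variety with smooth idempotent varieties, is normal, and we conclude by the product reduction above.

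The main obstacle I anticipate is rigorously justifying the factorization of a $\rho$-block variety into a gentle part times idempotent parts. One must verify that splitting off the special-loop coordinates genuinely produces a direct product of varieties, which requires checking that no relation in $J$ couples $M(e)$ for a special loop $e$ to the other arrow-matrices—precisely the content of the skewed-gentle/clannish condition that zero-relations neither begin nor end with a special loop nor involve its square (cf.\ condition (C1) and Definition~\ref{ref:SkewGentle}). Care is also needed because the idempotent $M(e)$ does interact with arrows incident to the special vertex through composition, so the cleanest route may be to pass to the \emph{admissible presentation} $\Lambda'=kQ'/I'$ (splitting the special vertex via the Chinese Remainder decomposition $ke\times k(1-e)$) where the idempotent is diagonalized and the block genuinely decouples; one then transfers the normality conclusion back. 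Establishing that this presentation-change respects irreducible components and normality is where the real work lies, with everything else following formally from Lemma~\ref{lem:idempotentsmooth} and the known gentle case.
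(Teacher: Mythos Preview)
Your overall strategy---$\rho$-block decomposition, Lemma~\ref{lem:idempotentsmooth} for the idempotent factors, and the Schubert-variety normality result for the gentle factors---is exactly the paper's approach. However, you are manufacturing an obstacle that does not exist. The $\rho$-block decomposition \emph{already} separates the special loops completely from the gentle part: since in a skewed-gentle algebra the only relation involving a special loop $e$ is $e^2-e$ (condition~(C1) forbids any zero-relation from beginning or ending with $e$), the $\sim$-equivalence class of $e$ is the singleton $\{e\}$. Hence each $\rho$-block is either a single special loop with its idempotent relation (giving an $E_n$) or a subquiver with only length-two monomial relations (a gentle algebra). There is no mixing within a block, so no further ``factorization of a $\rho$-block into gentle part times idempotent part'' is needed, and the product formula for $Z$ follows immediately from~(\ref{eqn: rho-block}).

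Your proposed detour through the admissible presentation is therefore unnecessary, and in fact risky: as the paper's remark after the admissible-presentation construction emphasizes, $\text{rep}_Q(J,\textbf{d})$ depends on the presentation, so transferring normality back and forth between presentations would require an additional argument you have not supplied. Drop that paragraph entirely; once you observe that each special loop is its own $\rho$-block, the proof is two lines.
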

\begin{proof}
Let $Z\subset\text{rep}_{Q^*}(\mathcal{R^*}, \textbf{d})$ be an irreducible component
and $\Gamma_1=kQ^*_1/\mathcal{R}_1^*, \ldots, \Gamma_t=kQ^*_t/\mathcal{R}_t^*$
be the $\rho$-blocks of $\Lambda'$.
The isomorphism of affine varieties given by Equation (\ref{eqn: rho-block})
gives a bijection on the set of irreducible components of $\text{rep}_{Q^*}(\mathcal{R}^*, \textbf{d})$
with the set of irreducible components of $\displaystyle\text{rep}_{Q_1^*}(\mathcal{R}_1^*, \pi_1(\textbf{d}))\,\times\,\ldots\,\times\, \text{rep}_{Q_t^*}(\mathcal{R}_t^*, \pi_t(\textbf{d}))$
via $Z\mapsto (\pi_1(Z),\ldots,\pi_t(Z))$.

As $\Gamma$ is complete skewed-gentle, each relation is either a path of length two or an idempotent relation enforced on a special loop.
Additionally, no relation can begin or end with a special loop by condition (C1) of
Definition \ref{def:Clannish}; therefore, the equivalence classes involving special
loops contain only a single special loop.

Letting $e_1, e_2,\ldots,e_l$ denote the
special loops of $Q$, by reordering if necessary, we can let
$\Gamma_1,\ldots,\Gamma_l$ be the corresponding $\rho$-blocks, where $\Gamma_i$
contains only the equivalence class $[e_i]$ for $1\leq i\leq l$ with $\mathcal{R}^*_i=\langle
e_i^2-e_i \rangle$. For each $1\leq i\leq l$, we have $\pi_i(Z)\subset \text{rep}_{Q^*_i}(\mathcal{R}^*_i, \pi_i(\textbf{d}))$ is a variety of idempotent matrices, so using the notation from (\ref{eqn:idempotentdef}), we may write $\pi_i(Z)\cong E_{\pi_i(\textbf{d})}(\textbf{r}_i)$ for some rank $0\leq \textbf{r}_i \leq \pi_i (\textbf{d})$.
By Lemma \ref{lem:idempotentsmooth},
 each $E_{\pi_i(\textbf{d})}(\textbf{r}_i)$ is smooth, so the product $\prod_{i=1}^l \pi_i(Z)\cong \prod_{i=1}^l E_{\pi_i(\textbf{d})}(\textbf{r}_i)$ is smooth as well.

With the $\rho$-blocks corresponding to special loops out of the way,
the remaining $\rho$-blocks $\Gamma_{l+1},\ldots, \Gamma_{t}$
are all gentle algebras. It follows from (\ref{eqn:IrrGentle})
that for each $l+1\leq k\leq t$, the irreducible component $\pi_k(Z)\subset \text{rep}_{Q^*_k}(\mathcal{R}^*_k, \pi_k(\textbf{d}))$
can be expressed as $\text{rep}_{Q^*_k}(\mathcal{R}^*_k, \pi_k(\textbf{d}), \textbf{r}_k)$
where $\textbf{r}_k$ is a maximal rank sequence, with respect to the coordinate wise order, for $\pi_k(\textbf{d})$.
From the discussion around (\ref{eqn:IrrGentle}), such irreducible components are known to be normal, so the $\pi_k(Z)$ are normal for all $l+1\leq k\leq t$.

Writing $Z$ as the product
\begin{equation}
Z\cong\prod_{i=1}^t \pi_i(Z)\cong \prod_{i=1}^l E_{\pi_i(\textbf{d})}(\textbf{r}_i)\times \prod_{k=l+1}^t \text{rep}_{Q^*_k}(\mathcal{R}^*_k, \pi_k(\textbf{d}), \textbf{r}_k)
\end{equation}
one finds that $Z$ is a product of normal varieties and is thus normal.
\end{proof}

\begin{proposition}\label{prop:dimbound}
    Let $\Gamma$ be complete skewed-gentle and let $Z$ be an irreducible component of $\text{rep}_{Q^*}(\mathcal{R}^*, \textbf{d})$, then $$\dim Z\leq \dim \text{GL}(\textbf{d}).$$
\end{proposition}
\begin{proof}
    As proven in Proposition \ref{lem:skewgentlenormal}, we can view an irreducible component $Z$ as a product $$Z\cong \prod_{i=1}^l E_{\pi_i(\textbf{d})}(\textbf{r}_i)\times \prod_{k=l+1}^t \text{rep}_{Q^*_k}(\mathcal{R}^*_k, \pi_k(\textbf{d}), \textbf{r}_k).$$ In \cite{ModSpBi}, it is shown $$\dim\text{rep}_Q(\mathcal{R}, \textbf{d}, \textbf{r})\leq \sum_{i\in Q_0}\textbf{d}(i)^2=\dim\text{GL}(\textbf{d}).$$ For an ordinary circular-complex factor, we use the same bound over all vertices $v$ in the block: $$\dim\text{Comp}(\textbf{n}, \textbf{r})\leq \frac{1}{2}\sum_{v}\textbf{d}(v)^2,$$ and for a special idempotent factor, $E_n(r)=\text{GL}_n\cdot A_r$, at a special vertex $s$, we have $$\dim E_n(r)=2r(n-r)\leq \frac{\textbf{d}(s)^2}{2}$$ by Lemma \ref{lem:idempotentdim}. 

    Since a special loop occupies one of the incoming and outcoming slots, the ordinary effective cycles plus the idempotent factor still count each vertex exactly twice. Adding the two bounds, we conclude $$\dim Z\leq \dim\text{GL}(\textbf{d}).$$
\end{proof}

\begin{corollary}\label{cor:clannishnormal}
Let $\Lambda=kQ/I$ and $\Gamma=kQ^*/\langle \mathcal{R}^* \rangle$ be as above. Let $Z_i\subseteq\text{rep}_Q(I, \textbf{d}_i),
1\leq i\leq m$, be irreducible components 
satisfying:
\begin{itemize}
    \item each $Z_i$ is a Schur component;
    \item each $c_A(Z_i) = 1$;
    \item $\Hom_A(M_i, M_j) = 0$ for $i \neq j$ and general points $M_i \in Z_i$, $M_j \in Z_j$.
\end{itemize}
Letting $\textbf{d}=\sum_{i=1}^m\textbf{d}_i$, then $Z=\overline{Z_1\oplus\ldots\oplus Z_m}$ is an irreducible
component of $\text{rep}_{Q^*}(\mathcal{R}^*, \textbf{d})$ with regard to the
closed embedding $\text{rep}_Q(I, \textbf{d})\subset
\text{rep}_{Q^*}(\mathcal{R}^*, \textbf{d})$. As such, $Z$ is normal.
\end{corollary}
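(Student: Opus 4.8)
The plan is to chain together the two structural results that have just been assembled. Corollary \ref{cor:clannishnormal} asserts exactly the hypotheses of Lemma \ref{lem:tameirred}, with $A = \Lambda = kQ/I$ and $B = \Lambda' = kQ/J$, where $J \subseteq I$ by Lemma \ref{lem:clantoskewgentle}. So the entire content of the corollary splits cleanly into two implications: first, that $Z$ is an irreducible component of $\text{rep}_Q(J, \textbf{d})$; second, that this irreducible component is normal.

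First I would dispatch the irreducibility-as-a-component claim by a direct invocation of Lemma \ref{lem:tameirred}. That lemma takes tame algebras $A = kQ/I$ and $B = kQ/I'$ with $I' \subseteq I$, irreducible Schur components $Z_i \subseteq \text{rep}_Q(I, \textbf{d}_i)$ with $c_A(Z_i) = 1$ and pairwise $\Hom$-orthogonality, and concludes that $Z = \overline{Z_1 \oplus \ldots \oplus Z_m}$ is an irreducible component of $\text{rep}_Q(I', \textbf{d})$. The only things to check before quoting it are that the hypotheses match: the three bullet-pointed conditions on the $Z_i$ are copied verbatim from the corollary, the inclusion $J \subseteq I$ is supplied by Lemma \ref{lem:clantoskewgentle}, and both $\Lambda$ and $\Lambda'$ are tame (clannish algebras are tame by the discussion following Definition \ref{def:Clannish}, and skewed-gentle algebras are tame since they are clannish, or directly since gentle algebras are tame). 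With $I' = J$, Lemma \ref{lem:tameirred} gives immediately that $Z$ is an irreducible component of $\text{rep}_Q(J, \textbf{d})$ relative to the closed embedding $\text{rep}_Q(I, \textbf{d}) \subset \text{rep}_Q(J, \textbf{d})$.

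For the second claim I would observe that $Z$ is now an irreducible component of $\text{rep}_Q(J, \textbf{d})$, where $\Lambda' = kQ/J$ is a skewed-gentle algebra. But normality of every irreducible component of a representation variety of a skewed-gentle algebra is precisely the statement of Proposition \ref{lem:skewgentlenormal}. Applying that proposition to the component $Z$ yields that $Z$ is normal, which closes the argument.

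The proof is therefore essentially a bookkeeping step, and I do not expect a genuine obstacle: the two lemmas it invokes were set up exactly so that their conclusions dovetail. The only point requiring a modicum of care is making sure the role assignments are consistent, namely that the ideal $J$ of the skewed-gentle algebra plays the role of the smaller ideal $I'$ in Lemma \ref{lem:tameirred} while the clannish ideal $I$ plays the role of the larger ideal, and that both algebras are verified to be tame so that Lemma \ref{lem:tameirred}, Lemma \ref{lem:dim}, and Proposition \ref{lem:skewgentlenormal} all apply. Once the embedding $\text{rep}_Q(I, \textbf{d}) \subset \text{rep}_Q(J, \textbf{d})$ is correctly oriented, the two invocations fire in sequence and the corollary follows.
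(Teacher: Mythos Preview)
Your proposal is correct and matches the paper's own proof essentially verbatim: the paper combines Lemma~\ref{lem:clantoskewgentle} with Lemma~\ref{lem:tameirred} to conclude $Z$ is an irreducible component of $\text{rep}_Q(J,\textbf{d})$, then invokes Proposition~\ref{lem:skewgentlenormal} for normality. Your additional remarks verifying tameness and the orientation of the ideal containment are fine elaborations but add nothing beyond what the paper leaves implicit.
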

\begin{proof}
After setting the added arrows when extending $Q$ to $Q^*$ to zero, we may view $Z$ as an irreducible component of $\text{rep}_{Q^*}(\mathcal{R}^*, \textbf{d})$ by combining Lemma \ref{lem:clantoskewgentle} and Proposition \ref{prop:dimbound} with Lemma \ref{lem:tameirred} and Remark \ref{remark:infinite}. Proposition \ref{lem:skewgentlenormal} then allows us to conclude $Z$ is normal.
\end{proof}

\begin{proof}[Proof of Theorem \ref{thm:projmod}]
Let $Y$ be an irreducible component of the projective variety $\mathcal{M}(\Lambda, \textbf{d})^{ss}_{\theta}$. Then there exists an irreducible component $Z\subset \text{rep}_Q(I, \textbf{d})$ such that $Y=\mathcal{M}(Z)^{ss}_{\theta}$. Let  $Z$ have $\theta$-stable decomposition
\begin{equation}
    Z=m_1Z_1\dot{+}\cdots\dot{+}m_rZ_r.
\end{equation}

By parts (a) and (b) of Theorem \ref{Thm:DecModSp}, we may assume none of the $Z_i$ are orbit closures
and $Z=\overline{Z_1^{\oplus m_1}\oplus\cdots\oplus Z_r^{m_r}}$. We prove $Z$ is normal via Corollary \ref{cor:clannishnormal}.

We first note that as each $Z_i$ is $\theta$-stable, a general element of $Z_i$ is Schur. Additionally, as none of the $Z_i$ are orbit closures, we have $\text{hom}(Z_i, Z_j)=0$ for all $1\leq i, j \leq m$.
Indeed, as each $Z_i$ is $\theta$-stable, one has $\text{hom}(Z_i, Z_j)=0$ whenever
$i\neq j$, and when $i=j$, just choose two non-isomorphic points $M_{\lambda}, M_{\mu}$
with $\lambda\neq \mu$ in the rational curve $C$ from Theorem \ref{thm:tameirreducible}.
As $\text{Hom}(M_{\lambda}, M_{\mu})=0$, it follows $\text{hom}(Z_i, Z_i)=0$. 

We have proven that the general element of $Z$ is a direct sum of Hom-orthogonal Schur band components, so Corollary \ref{cor:clannishnormal} gives that $Z$ is normal. As normality is preserved by quotients, $\mathcal{M}(Z)^{ss}_{\theta}$ is normal as well. 
By Theorem \ref{Thm:DecModSp}(c), we get the desired decomposition
\begin{equation*}
    \mathcal{M}(Z)^{ss}_{\theta}\cong \prod_{i=1}^r S^{m_i}(\mathcal{M}(Z_i)^{ss}_{\theta})\cong \prod_{i=1}^r\mathbb{P}^{m_i}. \qedhere
\end{equation*}
\end{proof}

\section{Future Directions}
\begin{enumerate}
\item Is there a class of algebras akin to clannish algebras for quivers of type $\widetilde{\mathbb{E}}$?  If so, would we be able to generalize the arguments of this paper to conclude that their moduli spaces are products of projective spaces?
\item Rather than focusing on a specific class of tame algebra, one can instead focus on a particularly nice irreducible component for arbitrary tame algebras. With $kQ/I$ tame acyclic and  $$\mathcal{P}_A(\textbf{d}):=\{M\in \text{rep}_Q(I, \textbf{d})\,|\, \text{pdim}_A M\leq 1\}$$ one has that $\mathcal{C}_A(\textbf{d})=\overline{\mathcal{P}_A(\textbf{d})}$ is an irreducible component. The following problem was posed by Calin Chindris.

\begin{problem}
Let $A$ be acyclic and tame. 
If $\textbf{d}\in\mathbb{N}^{Q_0}$ is such that $\mathcal{P}_A(\textbf{d})\neq \emptyset$, describe $\mathcal{M}(\mathcal{C}_A(\textbf{d}))^{\text{ss}}_{\langle\textbf{d},- \rangle}$.\\

\end{problem}

\end{enumerate}

\section{Appendix: The Geometry of  Standard and Admissible Presentations}

When discussing the standard and admissible presentations of a clannish algebra, we demonstrated that the representation varieties of the two presentations are generally not isomorphic to one another (see Remark \ref{remark:presentations}). In this appendix, we explore how the irreducible components of the representation varieties of the two presentations relate to one another. In particular, we find there is a correspondence between irreducible components of the two presentations and these corresponding components are normal simultaneously. Ultimately, regardless of the presentation, they descend to the same moduli problem. The techniques and machinery used in this section bears some resemblance to the work involving node-splitting found in Sections 3 and 4 of \cite{KinAnd}, despite special vertices not being nodes. 

To begin, let $\Lambda=kQ/I$ be a clannish algebra in standard presentation with special loops $e_v:v\rightarrow v$ at special vertices $v\in Q_0^{\text{sp}}$ and let $\Lambda'=kQ'/I'$ be the admissible presentation with split special vertices. Fix a dimension vector $\textbf{d}\in \mathbb{N}^{Q_0}$. For each special vertex, a representation $M$ assigns to $e_v$ an idempotent matrix $E_v=M(e_v)\in \text{Mat}_{d(v)\times d(v)}$.

Fix a rank vector $\textbf{r}=(r_v)_{v\in Q_0^{\text{sp}}}$, $0\leq r(v)\leq d(v)$. Let $\text{rep}_Q^{\textbf{r}}(I, \textbf{d})$ be the locus where $\text{rank}(E_v)=r_v$ for every special vertex $v$. We may write $k^{d_v}=\text{im}E_v\oplus \text{ker}E_v$ at each special vertex $v$. The rank vector $\textbf{r}$ then determines a dimension vector $\textbf{d}^{\textbf{r}}$ for the split quiver $Q'$ by $$\textbf{d}^{\textbf{r}}(v^+)=r_v, \qquad \textbf{d}^{\textbf{r}}(v^-)=d(v)-r_v$$ on the split special vertices and $\textbf{d}^{\textbf{r}}(w)=\textbf{d}(w)$ on the ordinary vertices.

After applying a change of basis at each special vertex, every representation can be moved to one where $M(e_v)=P_v$ for all $v\in Q_0^{\text{sp}}$, where $P_v=\begin{pmatrix} I_{r_v} && 0\\
0 && 0\end{pmatrix}$. Moreover, in Section 4, we observed that we have $E_{d(v)}(r_v)=\text{GL}_{d(v)}\cdot P_v$ with $\dim E_n(r)=2r(n-r)$. In particular, $$E_n(r)\cong \text{GL}_n/(\text{GL}_{r}\times \text{GL}_{n-r}).$$

Let $S_{\textbf{r}}\subseteq \text{rep}^{\textbf{r}}_Q(I,\textbf{d})$ be the closed slice consisting of representations satisfying $M(e_v)=P_v$ for every special vertex $v$ and decompose each vector space at a special vertex as $$k^{d(v)}\cong k^{r_v}\oplus k^{d(v)-r_v}.$$ 

With respect to the arrows, the corresponding linear transformations decompose into blocks. For example, if we have an arrow $a:u\rightarrow v$ with $u$ ordinary and $v$ special, then $M(a):k^{d(u)}\rightarrow k^{r_v}\oplus k^{d(v)-r_v}$ decomposes as $$M(a)=\begin{pmatrix}
    M(a^+)\\
    M(a^-)
\end{pmatrix}$$ where $a^+:u\rightarrow v^+$ and $a^-:u\rightarrow v^-$. Similarly, an arrow between two special vertices decomposes into block maps involving the arrows $\prescript{+}{}a^+, \prescript{+}{}a^-, \prescript{-}{}a^+, \prescript{-}{}a^-$. Naturally, we find $S_\textbf{r}\cong \text{rep}_{Q'}(I', \textbf{d}^{\textbf{r}})$. 

Let $\displaystyle{G_{\textbf{r}}=\prod_{v\in Q_0^{\text{sp}}}} \text{GL}_{d(v)}$ act by changing basis at the special vertices and let \\$\displaystyle{H_{\textbf{r}}=\prod_{v\in Q_0^{\text{sp}}}(\text{GL}_{r_v}\times \text{GL}_{d(v)-r_v})}$ be the stabilizer of the standard idempotents $P_v$, then 
\begin{align*}
    \text{rep}^{\textbf{r}}_{Q}(I, \textbf{d})&\cong G_\textbf{r}\times_{H_{\textbf{r}}} S_{\textbf{r}}\\
    &\cong G_{\textbf{r}}\times_{H_{\textbf{r}}} \text{rep}_{Q'}(I', \textbf{d}^{\textbf{r}})
\end{align*} Thus $\text{rep}^{\textbf{r}}_{Q}(I, \textbf{d})$ is an associated bundle over the smooth homogeneous space $G_{\textbf{r}}/H_{\textbf{r}}$ with fiber $\text{rep}_{Q'}(I', \textbf{d}^{\textbf{r}})$. 

As $G_{\textbf{r}}/H_{\textbf{r}}$ is smooth and irreducible, the irreducible components of $G_{\textbf{r}}\times_{H_{\textbf{r}}} \text{rep}_{Q'}(I', \textbf{d}^{\textbf{r}})$ are obtained from the irreducible components of $\text{rep}_{Q'}(I', \textbf{d}^{\textbf{r}})$, so if $Z'\subseteq \text{rep}_{Q'}(I', \textbf{d}^{\textbf{r}})$ is irreducible, so too is $$Z=G_{\textbf{r}}\times_{H_{\textbf{r}}} Z'.$$ Further, the projection $Z: G_{\textbf{r}}\times_{H_{\textbf{r}}} Z'\rightarrow G_{\textbf{r}}/H_{\textbf{r}}$ is locally trivial in the Zariski topology with fiber $Z'$. As $G_{\textbf{r}}/H_{\textbf{r}}$ is smooth, locally $Z$ looks like $U\times Z'$ for a smooth open set $U$. As normality is stable under product with a smooth variety and can be checked locally, it follows that $Z$ is normal if and only if $Z'$ is normal.

Letting $\theta\in \mathbb{Z}^{Q_0}$ be a weight for the standard presentation, then it induces a weight $\theta^{\textbf{r}}$ on the admissible presentation by setting $\theta^{\textbf{r}}(w)=\theta(w)$ whenever $w$ is an ordinary vertex and $$\theta^{\textbf{r}}(v^+)=\theta(v), \qquad \theta^{\textbf{r}}(v^-)=\theta(v)$$ for each special vertex $v$. With this choice, we have $\theta\cdot \textbf{d}=\theta^{\textbf{r}}\cdot \textbf{d}^{\textbf{r}}$ and subrepresentations correspond under the the idempotent decomposition. It follows that $M\in Z$ is $\theta$-semistable if and only if $M'\in Z'$ is $\theta^{\textbf{r}}$-semistable.

Finally, if $Z$ has $\theta$-stable decomposition $$Z=m_1Z_1\dot{+}\cdots\dot{+}m_rZ_r$$ then $Z'$ has an analogous $\theta^{\textbf{r}}$-stable decomposition $$Z'=m_1Z'_1\dot{+}\cdots\dot{+}m_rZ'_r.$$ Applying the moduli-decomposition theorem to both, we have $$\mathcal{M}(Z)^{ss}_{\theta}\cong \mathcal{M}(Z')^{ss}_{\theta^{\textbf{r}}}\cong\prod_i \mathbb{P}^{m_i}.$$

\bibliographystyle{alpha}
\bibliography{bibliography.bib}

\end{document}